\renewcommand{\a}{\alpha}
\renewcommand{\b}{\beta}
\newcommand{\e}{\epsilon}
\renewcommand{\l}{\lambda}
\newcommand{\g}{\gamma}
\renewcommand{\d}{\delta}
\newcommand{\Th}{\Theta}
\newcommand{\oc}{c^\ast}
\DeclareMathOperator{\sgn}{sgn}
\DeclareMathOperator{\ind}{Index}
\DeclareMathOperator{\tr}{Tr}
\DeclareMathOperator{\res}{Res}
\newcommand{\I}{\mathcal{I}}
\newcommand{\D}{\mathcal{D}}
\newcommand{\AD}{\mid \mathcal{D} \mid}
\renewcommand{\H}{\mathcal{H}}
\renewcommand{\S}{\mathcal{S}}
\newcommand{\A}{\mathcal{A}}
\newcommand{\B}{\mathcal{B}}
\newcommand{\NH}{\mathbb{N}}
\newcommand{\ZH}{\mathbb{Z}}
\newcommand{\CH}{\mathbb{C}}
\newcommand{\RH}{\mathbb{R}}
\renewcommand{\TH}{\mathbb{T}}
\newcommand{\torus}{C^{\infty}(\mathbb{T}^3_\Theta)}
\newcommand{\lcom}{\left[}
\newcommand{\rcom}{\right]}
\newcommand{\ot}{\otimes}
\newcommand{\set}[1]{\lbrace #1 \rbrace}
\newcommand{\ov}[1]{\overline{#1}}
\newcommand{\fl}{\text{for all }}
\newcommand{\braket}[2]{ \langle #1,#2 \rangle} %% scalar product
\def\build#1_#2^#3{\mathrel{
\mathop{\kern 0pt#1}\limits_{#2}^{#3}}}
\newbox\ncintdbox \newbox\ncinttbox
\newcommand{\ncint}{\mathop{\mathchoice{\copy\ncintdbox}%
    {\copy\ncinttbox}{\copy\ncinttbox}%
    {\copy\ncinttbox}}\nolimits}
\newcommand{\ra}{\rightarrow}
\newcommand{\nn}{\nonumber}
\numberwithin{equation}{section}
\theoremstyle{plain}
\newtheorem{thm}{Theorem}[section]
\newtheorem{lem}[thm]{Lemma}
\newtheorem*{1_ax}{Dimension}
\newtheorem*{2_ax}{Regularity}
\newtheorem*{3_ax}{Dimension Spectrum}
\theoremstyle{definition}
\newtheorem{defn}{Definition}[section]
\theoremstyle{remark}
\newtheorem{rem}{Remark}[section]
\begin{document}

\title[Chern-Simons action]{Chern-Simons theory for the noncommutative $3$-torus $\torus$}

\author{Oliver Pfante}

\address{Oliver Pfante \\
         Mathematisches Institut\\
         Westfälische Wilhelms-Universität Münster \\
         Einsteinstraße 62 \\
         48149 Münster\\
         Germany 
}
\email{oliver.pfante@uni-muenster.de}

\maketitle

\begin{abstract}
We study the Chern-Simons action, which was defined for noncommutative spaces in general by the author, for the noncommutative $3$-torus $\torus$, the universal $C^\ast$-algebra generated by $3$-unitaries. D. Essouabri, B. Iochum, C. Levy, and A. Sitarz constructed a spectral triple for $\torus$. We compute the Chern-Simons action for this noncommutative space. In connection with this computation we calculate the first coefficient in the loop expansion series of the corresponding Feynman path integral with the Chern-Simons action as Lagrangian. The result is independent of the deformation matrix $\Th$ and always $0$.
\end{abstract}

\section{Introduction}

S.-S. Chern and J. Simons uncovered a geometrical invariant which grew out of an attempt to derive a purely combinatorial formula for the first Pontrjagin number of a $4$-manifold. This Chern-Simons invariant turned out to be a higher dimensional analogue of the $1$-dimensional geodesic curvature and seemed to be interesting in its own right \cite{chern_simons}. This classical Chern-Simons invariant has found numerous applications in differential geometry, global analysis, topology and theoretical physics. \\
E. Witten \cite{witten} used the Chern-Simons invariant to derive a $3$-dimensional quantum field theory in order to give an intrinsic definition of the Jones Polynomial and its generalizations dealing with the mysteries of knots in three dimensional space. Witten's approach is motivated by the Lagrangian formulation of quantum field theory, where observables are computed by means of integration of an action functional over all possible physical states modulo gauge transformations -- i. e. one computes the Feynman \textit{path integral}. \\
Knot polynomials deal with topological invariants, and understanding these theories as quantum field theories, as Witten did, involves the construction of theories in which all of the observables are topological invariants. The physical meaning of \textquotedblleft topological invariance\textquotedblright $\,$is \textquotedblleft general covariance\textquotedblright. A quantity that can be computed from a manifold $M$ as a topological space without any choice of a metric is called a \textquotedblleft topological invariant" by mathematicians. To a physicist, a quantum field theory defined on a manifold $M$ without any a priori choice of a metric on $M$ is said to be generally covariant. Obviously, any quantity computed in a generally covariant quantum field theory will be a topological invariant. Conversely, a quantum field theory in which all observables are topological invariants can naturally be seen as a generally covariant quantum field theory. The surprise, for physicists, comes in how general covariance in Witten's three dimensional covariant quantum field theory is achieved. General relativity gives us a prototype for how to construct a quantum field theory with no a priori choice of metric - we introduce a metric, and then integrate over all metrics. Witten constructs an exactly soluble generally covariant quantum field theory in which general covariance is achieved not by integrating over metrics, instead he starts with a gauge invariant Lagrangian that does not contain any metric -- the Chern-Simons invariant, which becomes the \textit{Chern-Simons action}. \\

Let $M$ denote a $3$-dimensional, closed, oriented manifold and $A \in M_N(\Omega^1(M))$ a hermitian $N \times N$-matrix of differential $1$-forms on $M$. The Chern-Simons action for $A$ is given by:
\begin{equation} \label{eq_1.1}
S_{CS}(A) := \dfrac{k}{4 \pi} \int_M \tr \left( A \wedge dA + \dfrac{2}{3} A \wedge A \wedge A \right) 
\end{equation}
for an integer $k \in \ZH$ which is called the \textit{level}. Clearly, $S_{CS}(A)$ does not depend on a metric, and therefore it is a topological invariant. In addition, the Chern-Simons action is gauge invariant. Under gauge transformation 
\begin{equation*}
A \mapsto A^u= uAu^\ast + u d u^\ast
\end{equation*} 
of the $1$ form by a gauge-map $u: M \ra SU(N)$ we have 
\begin{equation*}
S_{CS}(A^u) = S_{CS}(A) + 2 \pi m
\end{equation*}
for an integer $m \in \ZH$, called the winding number of the gauge-map $u$. Next Witten computes the partition function 
\begin{equation} \label{eq_1.2}
Z(k) = \int  D \lcom A \rcom e^{i S_{CS}(A)} 
\end{equation} 
of the action, where we integrate over all possible hermitian $N \times N$-matrices of differential $1$-forms $A$ on $M$ modulo gauge transformation. Note that the integrand is well defined because it does not depend on the special choice of the representative $A$ of the equivalence class $\lcom  A \rcom$ due to the gauge invariance of the action.\\
The partition function $Z(k)$ is a topological invariant of the manifold $M$ in the variable $k$ corresponding to the so called \textit{no knot}-case in \cite{witten}, which is interesting in its own right. There exists a meromorphic continuation of $Z(k)$ in $k$ which was computed in \cite{witten} for some explicit examples. For the $3$-sphere $M = S^3$ and the gauge group $SU(2)$ one gets 
\begin{equation*}
Z(k) = \sqrt{\dfrac{2}{2+k}} \sin \left( \dfrac{\pi}{k + 2} \right) \, .
\end{equation*}
For manifolds of the form $X \times S^1$, for a $2$-dimensional manifold $X$, and arbitrary gauge group $SU(N)$, one obtains $Z(k) = N_X$ as result, with an integer $N_X \in \NH$ depending only on the manifold $X$.\\

In \cite{pfante_chern_simons} a noncommutative analogue of the Chern-Simons action is defined for noncommutative spaces, and gauge invariance of the action for a noncommutative analogue of gauge-mapping is proved. Noncommutative spaces are spectral triples $\left( \A, \H, \D \right)$ consisting of a pre-$C^\ast$-algebra $\A$ of bounded operators on the Hilbert space $\H$ and an unbounded, self-adjoint operator $\D$ on $\H$, with compact resolvent, such that the commutators $\lcom a, \D \rcom$ for $a \in \A$ are bounded. Spectral triples are the starting point for the study of noncommutative manifolds \cite{connes}. One can think of them being a generalization of the notion of ordinary differential manifolds because every spin manifold without boundary can be encoded uniquely by its spectral triple \cite{connes_reconstruction}. In this framework some core structures of topology and geometry, like the local index theorem \cite{connes_index} of A. Connes and H. Moscovici, were extended far beyond the classical scope.\\ 
In the present paper we apply the general results of \cite{pfante_chern_simons} to a special example of noncommutative spaces: the noncommutative $3$-torus $\torus$, the universal $C^\ast$-algebra generated by three unitaries which are subject to some commutation relations encoded by a skew-symmetric $3 \times 3$-matrix $\Th$. One can think of $\torus$ being a generalization of the commutative $\ast$-algebra $C^\infty(\TH^3)$ of smooth functions on the $3$-dimensional torus $\TH^3$. A spectral triple for $\torus$ is constructed and investigated by D. Essouabri, B. Iochum, C. Levy and A. Sitarz in \cite{iochum}. We apply the results of \cite{iochum} in order to compute the Chern-Simons action explicitly. In addition, we introduce a noncommutative replacement of the path integral \eqref{eq_1.2} in order to compute the first coefficient in the Taylor expansion series of the partition function $Z(k)$ in the coupling constant $k^{-1}$. Unfortunately, the result will be independent of the deformation matrix $\Th$ and zero in any case. More interesting results come up if one applies the general results of \cite{pfante_chern_simons} to another well known noncommutative space: the quantum group $SU_q(2)$ \cite{pfante_sphere}. \\

The approach of the paper is as follows. In the second section we make available the tools of noncommutative differential geometry and give an overview of the results in \cite{pfante_chern_simons}.  The third section starts introducing the spectral triple constructed in \cite{iochum} for the noncommutative $3$-torus $\torus$. Subsequently, we compute the noncommutative Chern-Simons action, defined in \cite{pfante_chern_simons} for arbitrary noncommutative spaces, explicitly. In the fourth section we discuss and solve the problems appearing if one deals with a noncommutative replacement of the path integral \eqref{eq_1.2}: we make precise the meaning of the measure $D \lcom A \rcom$, incorporate gauge breaking due to Faddeev-Popov, and compute the first coefficient in the Taylor expansion series of $Z(k)$ in the variable $k^{-1}$. \\

Finally, it is a great pleasure for me to thank my advisor Raimar Wulkenhaar who took care of me and my work for years.
\bigskip 

\section{Chern-Simons action on noncommutative spaces}
 
In this section we give a short overview of \cite{pfante_chern_simons} and some preliminaries about noncommutative differential geometry.

\subsection{Noncommutative differential geometry}

\begin{defn}
A spectral triple $(\A, \H, \D)$ consists of a unital pre-$C^\ast$-algebra $\A$, a separable Hilbert space $\H$ and a densely defined, self-adjoint operator $\D$ on $\H$ such that
\begin{itemize}
 \item there is a faithful representation $\pi: \A \ra \B(\H)$ of the pre-$C^\ast$-algebra $\A$ by bounded operators on $\H$
 \item the commutator $\lcom \D, \pi(a) \rcom $ is densely defined on $\H$ for all $a \in \A$ and extends to a bounded operator on $\H$
 \item the resolvent $(\D - \l)^{-1}$ extends for $\l \notin \RH$ to a compact operator on $\H$.
\end{itemize}
\end{defn}

\begin{rem}
A pre-$C^\ast$-algebra differs from an ordinary $C^\ast$-algebra being only closed with respect to holomorphic functional calculus. 
\end{rem}

The commutative world provides us with a large class of so called commutative spectral triples $(C^{\infty}(M), L^{2}(\S), \D)$ consisting of the Dirac operator acting on the Hilbert space $L^{2}(\S)$ of $L^{2}$-spinors over a closed spin manifold $M$. \\
Next we introduce the noncommutative replacement of $n$-forms. See \cite{voigt}.

\begin{defn}
For $n \geq 0$ let $\Omega^n(\A) = \A \ot \ov \A^{\otimes n}$ the vector space of noncommutative $n$-forms over $\A$, where $\ov \A = \A - \CH 1$ denotes the vector space we obtain from the algebra $\A$ if one takes away the unital element $1 \in \A$.  
\end{defn}

Here we used the notation
\begin{equation*}
\ov \A^{\otimes n} = \ov \A \otimes \cdots \otimes \ov \A
\end{equation*}
to denote the tensor product of $n$ copies of $\ov \A$. Elements of $\Omega^n(\A)$ are written in the more suggestive form $a^0da^1 \cdots da^n$ for $a^0, \ldots, a^n \in \A$, setting $a^0da^1 \cdots da^n = 0$ if $a^i = 1$ for $i \geq 1$. We also write $da^1 \cdots da^n$ if $a^0 = 1$.

\begin{rem}
The notion of noncommutative $n$-forms and their homology theory can be built up for arbitrary algebras, especially non-unital ones. In this case the definition differs slightly from the one we have just given. See \cite{voigt} for more details.
\end{rem}

One can define a $\A$-$\A$-bimodule structure on $\Omega^n(\A)$. Let us first consider the case $n =1$. We define a left $\A$-module structure on $\Omega^{1}(\A)$ by setting 
\begin{equation*}
a \left( a^{0}da^{1} \right)  = aa^{0}da^{1}.
\end{equation*}
A right $\A$-module structure on $\Omega^{1}(\A)$ is defined according to the Leibniz rule $d(ab) = dab+adb$ by 
\begin{equation*}
\left( a^{0}da^{1} \right) a=a^{0}d \left( a^{1}a \right) -a^{0}a^{1}da \, .
\end{equation*}
With these definitions $\Omega^{1}(\A)$ becomes an $\A$-$\A$-bimodule. It is also easy to verify that there is a natural isomorphism
\begin{equation*}
\Omega^{n}(\A) \cong \Omega^{1}(\A) \otimes_{\A} \Omega^{1}(\A) \otimes_{\A} \cdots \otimes_{\A} \Omega^{1}(\A) = \Omega^{1}(\A)^{\otimes_{\A}n}
\end{equation*} (as vector spaces) for every $n \geq 1$. As a consequence, the spaces $\Omega^{n}(\A)$ are equipped with an $\A$-$\A$-bimodule structure in a natural way. Explicitly, the left $\A$-module structure on $\Omega^{n}(\A)$ is given by
\begin{equation*}
a \left( a^{0} da^{1} \cdots da^{n} \right) = aa^{0}da^{1}\cdots da^{n} 
\end{equation*} 
and the right $\A$-module structure may be written as 
\begin{align*} 
 \left( a^{0} da^{1} \cdots d a^{n} \right) a = \, & a^{0} d a^{1} \cdots d \left(  a^{n}  a \right) + \\
						& \sum^{n-1}_{j=1} (-1)^{n-j} a^{0} da^{1} \cdots d(a^{j}a^{j+1}) \cdots da^{n}da \\
						& + (-1)^{n} a^{0}a^{1}da^{2}\cdots da^{n}da.
\end{align*}
Moreover, we view $\Omega^{0}(\A) = \A$ as an $\A$-bimodule in the obvious way using the multiplication in $\A$. We are also able to define a map $\Omega ^{n}(\A) \otimes \Omega^{m}(\A) \ra \Omega^{n+m}(\A)$ by considering the natural map
\begin{equation*}
\Omega^{1}(\A)^{\otimes_{\A}n} \otimes \Omega^{1}(\A)^{\otimes_{\A}m} \rightarrow \Omega^{1}(\A)^{\otimes_{\A}n} \otimes_{\A} \Omega^{1}(\A)^{\otimes_{\A}m} = \Omega^{1}(\A)^{\otimes_{\A}n+m}.
\end{equation*}
Let us denote by $\Omega(\A)$ the direct sum of the spaces $\Omega^{n}(\A)$ for $n \geq 0$. Then the maps $\Omega^{n}(\A) \otimes \Omega^{m}(\A) \rightarrow \Omega ^{n+m}(\A) $ assemble to the map $\Omega(\A) \otimes \Omega(\A) \rightarrow \Omega(\A)$. In this way the space $\Omega(\A)$ becomes an algebra. Actually $\Omega(\A)$ is a graded algebra if one considers the natural grading given by the degree of a differential form. Let us now define a linear operator $d: \Omega^{n}(\A) \ra \Omega^{n+1}(\A)$ by 
\begin{equation*}
 d \left( a^{0}da^{1}\cdots da^{n} \right) = d a^{0} \cdots d a^{n}, \quad d \left( d a^{1} \cdots d a^{n} \right) = 0
\end{equation*}
for $a^0, \ldots, a^n \in \A$. It follows immediately from the definition that $d^{2}= 0$. A differential form in $\Omega(\A)$ is called homogeneous of degree $n$ if it is contained in the subspace $\Omega^{n}(\A)$. Then the graded Leibniz rule
\begin{equation*}
d(\omega \eta) = d\omega \eta + (-1)^{\mid \omega \mid} \omega d \eta
\end{equation*} 
for homogeneous forms $\omega$ and $\eta$ holds true. \\
We define the linear operator $b: \Omega^{n}(\A) \ra \Omega^{n-1}(\A)$ by 
\begin{align*}
 b \left( a^{0}da^{1}\cdots da^{n} \right) =& (-1)^{n-1} \left( a^{0} da^{1} \cdots d a^{n-1} a^{n} - a^{n}a^{0} da^{1} \cdots d a^{n-1} \right) \\
					   =& (-1)^{n-1} \left[ a^{0}da^{1} \cdots da^{n-1}, a^{n} \right] \\
					   =& a^{0}a^{1}da^2 \cdots da^{n} + \\
						&\sum^{n-1}_{j=1} (-1)^{j} a^{0}da^{1} \cdots d \left( a^{j}a^{j+1} \right) \cdots da^{n} \\
						&+  (-1)^{n} a^{n}a^{0}da^{1} \cdots d a^{n-1}.
\end{align*}
A short calculation shows $b^{2} = 0$. Next we introduce the $B$-operator $B: \Omega^{n}(\A) \ra \Omega ^{n+1} (\A)$ given by
\begin{equation*}
B \left( a^{0}da^{1} \cdots d a^{n} \right) = \sum ^{n}_{i = 0} (-1)^{ni} d a^{n+1-i} \cdots da^{n}da^{0} \cdots da^{n-i}.
\end{equation*}
One can check that $ b^2 = B^{2} = 0$ and $Bb + bB = 0$. According to these equations we can form the $(B,b)$-bicomplex of $\A$.
\begin{displaymath}
\begin{xy}
 \xymatrix{ \vdots \ar[d] & \vdots \ar[d] & \vdots \ar[d] & \vdots \ar[d] \\
	\Omega^{3}(\A) \ar[d]_{b} & \Omega^{2}(\A) \ar[l]_{B} \ar[d]_{b} & \Omega^{1}(\A) \ar[l]_{B} \ar[d]_{b} & \Omega^{0}(\A) \ar[l]_{B} \\
	\Omega^{2}(\A) \ar[d]_{b} & \Omega^{1}(\A) \ar[d]_{b} \ar[l]_{B} & \Omega^{0}(\A) \ar[l]_{B}  \\
	\Omega^{1}(\A) \ar[d]_{b} & \Omega^{0}(\A) \ar[l]_{B}  \\
	\Omega^{0}(\A)  \\
}
\end{xy}
\end{displaymath}
The \textit{cyclic homology} of $\A$ is the homology of the total complex of the $(B,b)$-bicomplex of $\A$. \\
The corresponding cohomological version, cyclic cohomology, is quite easy to define. If $V$ is a vector space we denote by $V' = Hom(V, \mathbb{C})$ its dual space. If $f: V \rightarrow W $ is a linear map then it induces a linear map $W' \rightarrow V'$ which will be denoted by $f'$. Applying the dual space functor to the $(B,b)$-complex we have the $(B',b')$-complex of an algebra $\A$ which is again a bicomplex, i. e. $b'^2 =0$, $B'^2=0$, and $B'b'+b'B'=0$. The \textit{cyclic cohomology} of $\A$ is the cohomology of the total complex of the dual $(B',b')$-bicomplex of $\A$. In the sequel we simply write $B$ instead of $B'$ and $b$ instead of $b'$ because it is always clear which kind of map, the ordinary or the dual one, is considered.\\

Next we discuss the noncommutative replacement of pseudo differential calculus and the Atiyah-Singer's index theorem. The main source for this part is \cite{connes_index}. In order to make pseudo differential calculus work for spectral triples we must impose three additional constraints. 

\begin{1_ax}
There is an integer $n$ such that the decreasing sequence $(\l_{k})_{k \in \mathbb{N}}$ of eigenvalues of the compact operator $\mid \D \mid ^{-1} $ satisfies
\begin{equation*}
\lambda_{k} = O \left( k^{-1/n} \right)
\end{equation*} when $k \ra \infty $.
\end{1_ax}

The smallest integer which fulfils this condition is called the \textit{dimension} of the spectral triple. In the case of a $p$-dimensional, closed spin manifold, the dimension of the triple $(C^{\infty}(M), L^{2}(\S), \D)$ coincides with the dimension $p$ of the manifold. Since $\D$ has compact resolvent, its kernel $\ker \D$ is finite dimensional and $\AD ^{-1}$, with $\AD = \sqrt{\D^{2}}$, is well defined as $\AD ^{-1}$ on the orthogonal complement of $\ker \D$ and $0$ on $\ker \D$. Since the kernel finite dimensional, it does not influence the asymptotic behaviour of the Dirac operator or its inverse respectively. \\
Now we consider the derivation $\d$ defined by $\d (T) = \lcom \AD, T \rcom $ for an operator $T$ on $\H$.

\begin{2_ax}
Any element $b$ of the algebra generated by $\pi(\A)$ and $\lcom \D, \pi(\A) \rcom $ is contained in the domain of $\d^k$ for all $k \in \NH$, i. e. $\delta^{k}(b)$ is densely defined and has a bounded extension on $\H$.
\end{2_ax}

In the case of a spin manifold $M$ this condition tells us that we should work with $C^{\infty}(M)$-functions only. Combining the dimension and regularity conditions we obtain that the functions 
\begin{equation*}
\zeta_{b}(z)= \tr \left( b \AD ^{-2z} \right)
\end{equation*}
are well defined and analytic for $\text{Re} \, z >  p/2$ and all $b $ in the algebra $\B$ generated by the elements
\begin{equation*}
\delta^k(\pi(a)), \quad \delta^k(\lcom \D, \pi(a) \rcom ) \quad \text{for } a \in \A \, \text{ and } \, k \geq 0.
\end{equation*}
In order to apply the local index theorem \cite{connes_index} of A. Connes and H. Moscovici we need even more. More precisely, we have to introduce the notion of the \textit{dimension spectrum} of a spectral triple. This is the set $\Sigma \subset \CH$ of singularities of functions
\begin{equation*}
 \zeta_{b}(z)= \tr \left( b \AD ^{-2z} \right) \quad \text{Re} \, z >  p/2 \quad b \in \B \, .
\end{equation*}
We assume the following:

\begin{3_ax}
$\Sigma$ is a discrete subset of $\CH$. Therefore, for any element $b$ of the algebra $\B$ the functions 
\begin{equation*}
\zeta_{b}(z) = \tr(b \mid \mathcal{D} \mid ^{-2z}) 
\end{equation*}
extend holomorphically to $\CH \setminus \Sigma$.
\end{3_ax}

By means of the third condition the functions $ \zeta_{b} $ are meromorphic for any $b\in \B$. In the commutative case of a $p$-dimensional manifold $M$ the dimension spectrum $\Sigma$ consists of all positive integers less or equal $p$ and is simple, i. e. the poles of $\zeta_{b}$ are all simple. All spectral triples we are going to work with have simple dimension spectrum. \\
Let us introduce pseudo differential calculus (see \cite{connes_index} for a full discussion). Let $\text{dom}(\delta)$ denote the domain of the map $\delta$ which consists of all $a \in \B(\H)$ such that $\delta(a)$ is densely defined on $\H$ and can be extended to a bounded operator. We shall define the order of operators by the following filtration: For $ r \in \RH $ set
\begin{equation*}
 OP^{0} = \bigcap_{n \geq 0} \text{dom} (\delta ^{n} ) , \quad OP^{r} = \mid \D \mid ^{r} OP^{0}.
\end{equation*}
Due to regularity we obtain $\B \subset OP^{0} $. Additionally, for every $P $ in $OP^{r}$ the operator $ \AD ^{-r} P $ is densely defined and has a bounded extension. \\
Let $\nabla$ be the derivation $\nabla(T) = \lcom \D^2 , T \rcom $ for an operator $T$ on $\H$. Consider the algebra $ \mathfrak{D}$ generated by
\begin{equation*}
\nabla^k(T), \quad T \in \A \,   \text{ or } \,  \lcom \D , \A \rcom 
\end{equation*} 
for $k \geq 0$. The algebra $\mathfrak{D}$ is the analogue of the algebra of differential operators. By corollary B.1 of \cite{connes_index} we obtain $\nabla^k(T) \in OP^k$ for any $T$ in $\A$ or $\lcom \D, \A \rcom$. \\ 
We introduce the notation
\begin{equation*}
\tau_{k} \left( b \mid \D \mid ^{y} \right)  = \res_{z=0} \, z^{k} \, \tr \left( b \mid \D \mid^{y-2z} \right),
\end{equation*}
for $ b \in \mathfrak{D} $, $ y \in \Sigma$ and an integer $ k \geq 0 $. These residues are independent of the choice of the definition of $\AD ^{-1}$ on the kernel of $\D$ because the kernel is finite dimensional. A change of the definition of $\AD^{-1}$ on the kernel of $\AD$ results in a change by a trace class operator. \\

Now we are ready to state the local index theorem proven in \cite{connes_index} for spectral triples which fulfil the three conditions above. First, let us recall the definition of the index. Let $ u \in \A$ be a unitary. Then, by definition of a spectral triple, the commutator $\lcom \D, \pi(u) \rcom$ is bounded. This forces the compression $P \pi(u) P$ of $\pi(u)$ to be a Fredholm operator (if $F = \sgn \D $ then $P=(F+1)/2$). The index is defined by 
\begin{equation*}
\ind ( P \pi(u) P ) = \text{dim ker} \, P \pi(u)P - \text{dim ker} \, P \pi(u)^\ast P.
\end{equation*}
The index map $u \mapsto \ind(P \pi(u)P)$ defines a homomorphism from the $K_1$-group of the pre-$C^{\ast}$-algebra $\A$ to $\ZH$, see \cite{connes}. By the local index theorem this index can be calculated as a sum of residues $\tau_{k}(b \AD^{y})$, for various $b \in \mathfrak{D}$. Since we are only interested in the three dimensional case we state the local index theorem (Corollary II.1 of \cite{connes_index}) only for this case.

\begin{thm} \label{local_index}
Let $(\A, \H, \D)$ be a three dimensional spectral triple fulfilling the three conditions above and let $u \in \A$ be a unitary. Then 
\begin{equation*}
\ind \, P \pi(u) P = \phi_{1}( u^{*}du) - \phi_{3}(u^{*}dudu^{*}du),
\end{equation*}
where
\begin{align*} 
 \phi_{3}(a^0da^1da^2da^3) = & \quad \dfrac{1}{12} \, \tau_0 \left( \pi(a^0) \lcom \D, \pi(a^1) \rcom \lcom \D, \pi(a^2) \rcom \lcom \D, \pi(a_{3}) \rcom \AD^{-3}  \right) \\
&- \dfrac{1}{6} \, \tau_1 \left( \pi(a^0) \lcom \D, \pi(a^1) \rcom \lcom \D, \pi(a^2) \rcom\lcom \D, \pi(a^3) \rcom \AD^{-3}  \right),
\end{align*}
and
\begin{align*}
 \phi_{1} (a^{0}da^{1})  =& \quad \tau_0 \left( \pi(a^0) \lcom \D, \pi(a^1) \rcom \AD^{-1} \right) \\ 
			&- \dfrac{1}{4} \, \tau_0 \left( \pi(a^0) \nabla \left( \lcom \D, \pi(a^1) \rcom \right) \AD^{-1} \right) \\
			&- \dfrac{1}{2} \, \tau_1 \left( \pi(a^0) \nabla \left( \lcom \D, \pi(a^1)\rcom \right) \AD^{-3} \right) \\
			&+ \dfrac{1}{8} \, \tau_0 \left( \pi(a^0) \nabla^{2} \left( \lcom \D, \pi(a^1) \rcom \right) \AD^{-5} \right) \\
			&+ \dfrac{1}{3} \, \tau_1 \left( \pi(a^0) \nabla^{2}\left( \lcom \D, \pi(a^1) \rcom \right) \AD^{-5} \right) \\
			&+ \dfrac{1}{12}\, \tau_2 \left( \pi(a^0) \nabla^{2} \left( \lcom \D, \pi(a^1) \rcom \right) \AD^{-5} \right)
\end{align*}
for $a^0,a^1,a^2,a^3 \in \A$. In addition, the pair $(\phi_3, \phi_1)$ defines a cyclic cocycle in Connes' $(B,b)$-complex.
\end{thm}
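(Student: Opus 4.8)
The plan is to derive the formula by specializing, to dimension $p = 3$, the general local index theorem of Connes and Moscovici \cite{connes_index}. For an odd spectral triple satisfying the Dimension, Regularity and Dimension Spectrum conditions, that theorem produces a cochain $(\phi_n)_{n \text{ odd}}$ in the dual $(B,b)$-bicomplex whose components are universal finite linear combinations of residues
\begin{equation*}
\tau_j \left( \pi(a^0) \nabla^{k_1}(\lcom \D, \pi(a^1) \rcom) \cdots \nabla^{k_n}(\lcom \D, \pi(a^n) \rcom) \AD^{-n-2|k|} \right),
\end{equation*}
with $|k| = k_1 + \cdots + k_n$ and rational coefficients depending only on $n$, the multi-index $k = (k_1, \ldots, k_n)$ and the residue order $j$; the theorem further asserts that this cochain is a cocycle whose class computes the index pairing with $K_1(\A)$. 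First I would recall this statement together with the degree-$(2m+1)$ components $u^\ast du \, (du^\ast du)^m$ of the odd Chern character of the unitary $u$, so that $\ind \, P \pi(u) P$ is expressed as the a priori infinite alternating sum $\sum_m (-1)^m \phi_{2m+1}(u^\ast du (du^\ast du)^m)$, the normalising constants being absorbed into the $\phi_n$.

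The decisive point is that this sum truncates after $\phi_3$, and here the order calculus recalled above does the work. By Corollary B.1 of \cite{connes_index} we have $\nabla^{k}(T) \in OP^{k}$ for $T \in \A$ or $\lcom \D, \A \rcom$, so the operator displayed above lies in $OP^{-n-|k|}$. A residue $\tau_j$ of an operator of order strictly below $-p = -3$ vanishes, since such an operator is trace class and $\tr(\, \cdot \, \AD^{-2z})$ is then holomorphic at $z = 0$. Hence only terms with $n + |k| \leq 3$ survive: this excludes $\phi_n$ for $n \geq 5$ entirely, leaves $\phi_3$ with the single multi-index $k = (0,0,0)$, and leaves $\phi_1$ with $k_1 \in \{0,1,2\}$. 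This fixes the shape of both functionals, and the explicit rational coefficients $1, -\tfrac14, -\tfrac12, \tfrac18, \tfrac13, \tfrac1{12}$ and $\tfrac1{12}, -\tfrac16$, together with the residue orders $\tau_j$ that occur, are then obtained by substituting $p = 3$ into the universal Connes--Moscovici constants.

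That $(\phi_3, \phi_1)$ is a cocycle is itself part of the general theorem: the full residue cochain $(\phi_n)$ is a $(B,b)$-cocycle, and since $\phi_n = 0$ for $n \geq 5$, the surviving components of the cocycle identity are exactly $B \phi_1 = 0$, $b \phi_1 + B \phi_3 = 0$ and $b \phi_3 = 0$. As an independent consistency check I would re-derive these from the Leibniz rule for $\lcom \D, \cdot \rcom$ and $\nabla$ together with the trace property of the leading residue $\tau_0(\,\cdot\,) = \res_{z=0} \tr(\,\cdot\, \AD^{-2z})$ on $\mathfrak{D}$. Under the simple dimension spectrum enjoyed by the examples considered later the higher residues $\tau_j$, $j \geq 1$, vanish identically, so that only the tracial $\tau_0$ terms remain and this verification becomes routine.

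The genuinely hard analytic input --- that the residue cochain computes the index pairing --- is imported wholesale from \cite{connes_index}; the work specific to the present statement is the order-counting truncation and the coefficient bookkeeping. The step I expect to demand the most care is precisely this bookkeeping, namely extracting the displayed rational numbers and residue orders correctly from the universal formula; and, in a self-contained check of the cocycle relations, the fact that the $\tau_j$ with $j \geq 1$ are \emph{not} exact traces, so that the commutators generated by the Leibniz rule must be controlled modulo lower order using the filtration $OP^{\bullet}$ and the finiteness of the dimension spectrum.
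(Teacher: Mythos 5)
Your proposal is correct and takes essentially the same route as the paper: the paper gives no independent proof of this theorem, but simply imports it as Corollary II.1 of \cite{connes_index}, i.e.\ the Connes--Moscovici local index theorem specialized to dimension three. The extra detail you supply --- the $OP^{\bullet}$ order-counting showing that residues of operators of order below $-3$ vanish, so that only the terms with $n+|k|\leq 3$ survive and the cocycle truncates to $(\phi_3,\phi_1)$ --- is exactly the standard specialization argument implicit in that citation, and it is sound.
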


\medskip

\subsection{Chern-Simons action on noncommutative spaces} 

If $( \A, \H, \D ) $ is a spectral triple, then $( M_{N}( \A ), \H \otimes \CH^{N}, \D \otimes I_{N} ) $ is a spectral triple over the matrix algebra $M_N(\A)$ of $\A$, for a positive integer $N \in \NH$, satisfying all requirements imposed to $( \A, \H, \D )$. In the sequel, we shall work with the second one, whose differential forms $\Omega^1(M_{N}(\A)) = M_{N} \left( \Omega^1(\A) \right) $ are matrices with values in $\Omega^1(\A)$. The Dirac operator of the extended triple will be also denoted by $\D$ instead of $\D \otimes I_{N}$. A matrix 1-form $A = \left( a_{ij} db_{ij} \right) _{1 \leq i,j \leq N}$ is called hermitian if $A = A ^{\ast}$ with
\begin{equation*}
\left( \left( a_{ij} db_{ij} \right) _{i,j \leq N } \right) ^{\ast} = \left( \left( a_{ji}db_{ji} \right)^{\ast} \right) _{1 \leq i,j \leq N } \, ,
\end{equation*} 
where the $\ast$-operator on $\Omega(\A)$ is defined as follows: 
\begin{equation*}
 (a^0da^1 \cdots da^n )^{\ast} = (-1)^n d a^{n \ast} \cdots da^{1 \ast} a^{0 \ast} \, 
\end{equation*} 
for $a^0, \ldots, a^n \in \A$.

\begin{defn} \label{CS}
 Let $( \A, \H, \D ) $ be a spectral triple satisfying the conditions of section 2.1, $N \in \NH$, and $A \in M_{N} \left( \Omega^1(\A) \right)$ a hermitian matrix of 1-forms. We define the Chern-Simons action as \[S_{CS}(A) = 6 \pi k \phi_{3}\left( AdA +  \frac{2}{3} A ^{3} \right) - 2 \pi k \phi_{1}(A)\] for an integer $k$, with the cyclic cocycle $(\phi_3,\phi_1)$ of the local index theorem \ref{local_index}.
\end{defn} 

Let us introduce the notion of a gauge transformation. The gauge group of $\left( M_{N}\left( \A \right), \H \otimes \CH^{N}, \D \otimes I_{N} \right) $ is the group of unitary elements of $ M_{N}( \A )$. An element $u \in  M_{N}( \A )$ is called unitary if $u^{\ast}u=uu^{\ast}=1$. A gauge transformation of a 1-form $A \in M_{N} ( \Omega^1(\A) )$ by a unitary $u \in M_{N}( \A )$ is given by the transformation law 
\begin{equation*}
A \mapsto uAu^{\ast} + u du^{\ast}. 
\end{equation*}
Note if $A \in M_N(\Omega^1(\A))$ is a hermitian matrix of $1$-forms, then this holds true for $A^u$ too. This leads us to them main result of \cite{pfante_chern_simons}(see theorem 3.1).

\begin{thm} 
 Let $( \A, \H, \D ) $ be a spectral triple satisfying the conditions of section 2.1, $N \in \NH$, and $A \in M_{N} ( \Omega^1(\A) )$ a hermitian matrix of 1-forms. Then under the gauge transformation of $A$ by a unitary $u \in M_{N}(\A)$, the Chern-Simons action becomes 
\begin{equation*}
 S_{CS}(uAu^{\ast} + u du^{\ast}) = S_{CS}(A) + 2 \pi k \,  \ind(P \pi(u) P), 
\end{equation*}
where $P = (1+F)/2$ and $F = \sgn  \D$ is the sign of $\D$. 
\end{thm}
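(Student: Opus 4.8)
The plan is to recast the statement as an identity between the odd $(b,B)$-cocycle $\Phi = (\phi_{1},\phi_{3})$ produced by Theorem \ref{local_index} and two explicit odd chains, and then to let the cocycle property annihilate all boundary terms. First I would reorganize the constants. Writing $\langle (\phi_{1},\phi_{3}),(\omega_{1},\omega_{3})\rangle := \phi_{1}(\omega_{1}) + \phi_{3}(\omega_{3})$ for the pairing of the cocycle with an odd chain $(\omega_{1},\omega_{3}) \in \Omega^{1}(M_{N}(\A)) \oplus \Omega^{3}(M_{N}(\A))$, Definition \ref{CS} becomes
\[ S_{CS}(A) = 2\pi k\,\langle \Phi, \mathrm{cs}(A)\rangle, \qquad \mathrm{cs}(A) := \left(-A,\ 3AdA + 2A^{3}\right), \]
while the local index theorem \ref{local_index} reads
\[ \ind(P\pi(u)P) = \langle \Phi, \mathrm{ch}(u)\rangle, \qquad \mathrm{ch}(u) := \left(u^{\ast}du,\ -u^{\ast}du\,du^{\ast}du\right), \]
where $\phi_{1},\phi_{3}$ are extended to $M_{N}(\A)$ by tensoring with the $N\times N$ matrix trace. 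In these terms the claim $S_{CS}(A^{u}) = S_{CS}(A) + 2\pi k\,\ind(P\pi(u)P)$ is equivalent to $\langle \Phi,\ \mathrm{cs}(A^{u}) - \mathrm{cs}(A) - \mathrm{ch}(u)\rangle = 0$, so it suffices to show that $\mathrm{cs}(A^{u}) - \mathrm{cs}(A) - \mathrm{ch}(u)$ lies in the image of the total boundary $b+B$.

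Granting this, the conclusion is immediate from the cocycle property. That $(\phi_{3},\phi_{1})$ is a cyclic cocycle means, in the dual $(B,b)$-bicomplex, that $B\phi_{1}=0$, $\ b\phi_{1} + B\phi_{3}=0$ and $b\phi_{3}=0$. Hence for any even chain $(c_{0},c_{2}) \in \Omega^{0}(M_{N}(\A)) \oplus \Omega^{2}(M_{N}(\A))$ one has $(b+B)(c_{0},c_{2}) = (Bc_{0}+bc_{2},\ Bc_{2})$, and, using $\langle\phi,b\eta\rangle=\langle b\phi,\eta\rangle$ and $\langle\phi,B\eta\rangle=\langle B\phi,\eta\rangle$,
\[ \langle \Phi, (b+B)(c_{0},c_{2})\rangle = \langle B\phi_{1}, c_{0}\rangle + \langle b\phi_{1}+B\phi_{3}, c_{2}\rangle = 0. \]
Thus $\Phi$ vanishes on $\mathrm{im}(b+B)$, which is exactly what the reduction requires.

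The remaining step, and the real work, is the noncommutative transgression identity. I would expand $A^{u} = uAu^{\ast} + u\,du^{\ast}$ inside the graded differential algebra $M_{N}(\Omega(\A))$, using unitarity in the form $u\,du^{\ast} = -du\,u^{\ast}$ together with the graded Leibniz rule to compute $dA^{u}$, $A^{u}dA^{u}$ and $(A^{u})^{3}$. After collecting terms, the purely gauge contributions must assemble — thanks to the precise coefficient $\tfrac{2}{3}$ in the cubic term — into $(u^{\ast}du)^{3}$, matching $\mathrm{ch}(u)$, whereas every other contribution should be reorganized, via the explicit formulas for $b$ and $B$, as $Bc_{0}+bc_{2}$ in degree $1$ and $Bc_{2}$ in degree $3$ for suitable $c_{0}\in\Omega^{0}(M_{N}(\A))$, $c_{2}\in\Omega^{2}(M_{N}(\A))$. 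This is the algebraic skeleton of the classical transgression $\tr(A^{u}dA^{u}+\tfrac{2}{3}(A^{u})^{3}) = \tr(AdA+\tfrac{2}{3}A^{3}) - \tfrac{1}{3}\tr((u^{\ast}du)^{3}) + d(\cdots)$, but with the single trace and Stokes' theorem replaced by the two boundary operators $b$ and $B$.

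The hardest part will be precisely this bookkeeping. Classically gauge invariance rests on two facts, cyclicity of the trace and $\int_{M} d(\cdots)=0$; noncommutatively neither $\phi_{3}$ nor $\phi_{1}$ is a trace, and these two effects become the vanishing of $b$- and $B$-boundaries against the cocycle. The decisive subtlety is that the two are coupled: a leftover Hochschild ($b$-)boundary in degree $3$ cannot be discarded by $\phi_{3}$ on its own, and it is exactly the extra summand $\phi_{1}(A)$ in the Chern-Simons action — which has no counterpart in the naive commutative $3$-form — that absorbs it through the relation $b\phi_{1}+B\phi_{3}=0$. I would therefore watch carefully (i) that the non-pure-gauge remainder lands in $\mathrm{im}(b)+\mathrm{im}(B)$ with no stray degree-$5$ term (so the chain $(c_{0},c_{2})$ needs no $\Omega^{4}$ component, and $Bc_{2}$ is the entire degree-$3$ error), and (ii) the matrix-trace bookkeeping, which supplies the cyclic reorderings that the noncommutativity of $\A$ alone would forbid.
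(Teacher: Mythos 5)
The paper itself contains no proof of this statement --- it is quoted as the main theorem of \cite{pfante_chern_simons}, listed as ``in preparation'' --- so your proposal has to be judged on its own merits rather than against an in-paper argument. Its formal skeleton is correct and correctly executed as far as it goes: the constants in your rewriting $S_{CS}(A) = 2\pi k\,\langle\Phi,\mathrm{cs}(A)\rangle$ with $\mathrm{cs}(A)=(-A,\,3AdA+2A^{3})$, and $\ind(P\pi(u)P)=\langle\Phi,\mathrm{ch}(u)\rangle$ with $\mathrm{ch}(u)=(u^{\ast}du,\,-u^{\ast}du\,du^{\ast}du)$, do match Definition \ref{CS} and Theorem \ref{local_index}, and your verification that $\langle\Phi,(b+B)(c_{0},c_{2})\rangle=0$ is a correct use of the cocycle identities $B\phi_{1}=0$, $b\phi_{1}+B\phi_{3}=0$, $b\phi_{3}=0$. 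The genuine gap is the step you explicitly defer: the noncommutative transgression identity, i.e.\ the existence of $c_{0}\in\Omega^{0}(M_{N}(\A))$, $c_{2}\in\Omega^{2}(M_{N}(\A))$ with
\begin{align*}
-A^{u}+A-u^{\ast}du &= Bc_{0}+bc_{2}\,,\\
3A^{u}dA^{u}+2(A^{u})^{3}-3AdA-2A^{3}+u^{\ast}du\,du^{\ast}du &= Bc_{2}\,.
\end{align*}
This is not a routine afterthought; it is the entire substantive content of the theorem (everything else is formal duality), and you neither produce candidates for $c_{0},c_{2}$ nor expand $A^{u}dA^{u}$ and $(A^{u})^{3}$ to show the remainder has this shape. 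Even the preliminary step you treat as automatic --- that the pure-gauge terms ``assemble into $(u^{\ast}du)^{3}$'' --- is false at the level of forms: in $\Omega^{3}(M_{N}(\A))$ the elements $(udu^{\ast})^{3}$, $du\,u^{\ast}du\,du^{\ast}$, $u^{\ast}du\,du^{\ast}du$, etc.\ are distinct and agree only modulo commutators, and each such commutator must itself be exhibited as a $b$- or $B$-boundary and fed back into the bookkeeping. Until that computation is carried out, what you have is a correct reduction, not a proof.

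A smaller point indicates the bookkeeping is not yet under control. You write that ``a leftover Hochschild ($b$-)boundary in degree $3$ cannot be discarded by $\phi_{3}$ on its own''; the opposite is true. Since $b\phi_{3}=0$, the cochain $\phi_{3}$ kills every $b$-boundary $bc_{4}$ with $c_{4}\in\Omega^{4}(M_{N}(\A))$ --- which also makes your worry (i) moot: an $\Omega^{4}$-component of the chain would be harmless, because its degree-$3$ contribution $bc_{4}$ pairs to zero and its degree-$5$ contribution meets $\phi_{5}=0$. What $\phi_{3}$ cannot kill by itself are the $B$-boundaries $Bc_{2}$ in degree $3$; these are exactly the terms absorbed by the $\phi_{1}$-summand of the action via $\phi_{3}(Bc_{2})=-\phi_{1}(bc_{2})$, i.e.\ via $b\phi_{1}+B\phi_{3}=0$. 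Getting this coupling straight is what forces the relative normalization of the $\phi_{3}$- and $\phi_{1}$-terms in Definition \ref{CS}, so the slip is not cosmetic: it is precisely the mechanism your missing computation would have to exploit, and misidentifying it is the kind of error that would derail the term-by-term matching once you attempt it.
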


\section{Chern-Simons action for the noncommutative $3$-torus $\torus$.}

In this section we compute the Chern-Simons action of definition \ref{CS} explicitly for the noncommutative $3$-torus $\torus$ and its spectral triple constructed in \cite{iochum}. We start with an outline of the results in \cite{iochum}.

\medskip

\subsection{The spectral triple of $\torus$}

Let $\torus$ be the smooth noncommutative $3$-torus associated to a non-zero, skew-symmetric deformation matrix $\Th \in M_{3}(\RH) $. This means that $\torus$ is the $\ast$-algebra generated by $3$ unitaries $u_i$, $i = 1, 2,3$, which are subject to the relations
\begin{equation*}
u_i u_j = e ^{i \Theta_{ij}} u_j u_i ,
\end{equation*}
and with Schwartz coefficients: an element $a \in \torus$ can be written as $a = \sum _{k \in \ZH^3} a_k U_k$, where $ \left\lbrace  a_k \right\rbrace \in \S(\ZH^3)$ with the Weyl elements defined by 
\begin{equation*}
U_k := e^{- \frac{i}{2} k. \chi k} u_1 ^{k_1} u_2 ^{k_2} u_3 ^{k_3} \quad \fl k = ( k_1, k_2, k_3) \in \ZH^3, 
\end{equation*}
and the above relation reads
\begin{equation*}
U_k U_q = e^{- \frac{i}{2} k. \Theta q} U_{k+q} \quad U_{k}U_{q} = e^{-i k. \Theta q} U_{q} U_{k}
\end{equation*}
where $\chi$ is the matrix restriction of $ \Th $ to its upper triangular part. Thus the unitary operators $U_k$ satisfy $ U_k ^{\ast} = U_{-k}$ and
\begin{equation*}
\lcom U_k , U_l \rcom = -2i \sin \left( \frac{1}{2} k. \Theta l \right) U_{k+l}.
\end{equation*} 
Let $\tau$ be the trace on $\torus$ defined by 
\begin{equation*} 
\tau \left( \sum_{k \in \mathbb{Z}^3} a_k U_k \right)= a_0  
\end{equation*} 
and $\H_{\tau}$ be the GNS Hilbert space obtained by completion of $\torus$ with respect to the norm induced by the scalar product $ \braket{a}{b}  := \tau(a^{\ast}b)$. On 
\begin{equation} \label{GNS}
\H_{\tau} = \left\lbrace \sum_{k \in \ZH^3} a_k U_k : \,  \left\lbrace a_k \right\rbrace _k  \in l^2(\ZH^3)\right\rbrace \, ,
\end{equation}
we consider the left regular representations of $\torus$ by bounded operators which we denote by $L(.)$.\\
Let also $\d_{\mu}$, $\mu \in \set{1,2,3}$, be the 3 (pairwise commuting) canonical derivations, defined by
\begin{equation*}
\delta_{\mu}(U_k) := i k_{\mu} U_k \, .
\end{equation*} 
We need to fix notations: let $\A_\Th := \torus$ acting on $\H := \H_{\tau} \ot \CH^2$ the square integrable sections of the trivial spin bundle over $\TH^3.$ \\
Each element of $\A_\Th$ is represented on $\H$ as $L(a) \ot I_{2}$, where $L$ is the left multiplication. The Dirac operator is given by
\begin{equation*}
\D := -i \d_{\mu} \ot \g^{\mu},
\end{equation*} 
where we use summation notation and hermitian Dirac matrices
\begin{equation*}
\g^1 = \left(\begin{array}{cc}
0 & 1 \\ 
1 & 0
\end{array} \right) , \, \g^2 = \left( \begin{array}{cc}
0 & -i \\ 
i & 0
\end{array} \right), \,  \g^3 = \left( \begin{array}{cc}
1 & 0 \\ 
0 & -1 
\end{array} \right)  
\end{equation*}
By the definitions above one deduces quite easily that $(\A_\Th, \H, \D)$ is a spectral triple which fulfils the regularity and dimension condition of section 2.1. In order to achieve the dimension spectrum condition we need to impose a so called \textit{diophantine} condition on the skew-symmetric matrix $\Th$. See definition 2.4 in \cite{iochum}

\begin{defn}
(i) Let $\d > 0$. A vector $a \in \RH^n$ is said to be $\d$-diophantine if there exists a $c > 0$ such that $\mid q \cdot a - m \mid \geq  c q^{ - \d }$ for all $ q \in \ZH^n \setminus \set{0} $ and for all $m \in \ZH$. We denote by $\mathcal{BV}(\delta)$ the set of $\d$-diophantine vectors and $ \mathcal{BV}:= \bigcup _{\delta > 0} \mathcal{BV}(\delta) $ the set of diophantine vectors. \\
(ii) A real $n \times n$ matrix $\Th \in M_n (\RH)$ is said to be diophantine if there exists a vector $u \in \ZH^{n}$ such that $( \Th(u))^T$ is a diophantine vector of $\RH^n$.
\end{defn}

\begin{rem}
A classical result from Diophantine approximation asserts that for all $\d > n$, the Lebesgue measure of $\RH^n \setminus \mathcal{BV}(\delta)$ is zero (i.e., almost every element of $\RH^n$ is $\d$-diophantine).\\
Let $ \Th \in M_n(\RH). $ If its row of index $i$ is a diophantine vector of $\RH^n$ then $(\Th(e_i))^T \in \mathcal{BV}$ and thus $\Th$ is a diophantine matrix. It follows that almost any matrix of $M_n(\RH) \approx \RH^{n^2}$ is diophantine.
\end{rem}

Proposition 5.4 of \cite{iochum} yields:

\begin{thm} 
If $\frac{1}{2 \pi} \Th$ is diophantine, then the dimension spectrum of $(\A_\Th, \H, \D)$ is equal to the set $ \set{3-k : \, k \in \NH_0 }$ and all these poles are simple.
\end{thm}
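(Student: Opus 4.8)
The plan is to determine, for every $b$ in the algebra $\B$ generated by the $\delta^k(\pi(a))$ and $\delta^k(\lcom\D,\pi(a)\rcom)$, the location and order of the poles of $\zeta_b(s)=\tr(b\AD^{-s})$ (I use the normalisation $\AD^{-s}$, under which the $y\in\Sigma$ appearing in the residues $\tau_k(b\AD^{y})$ of the index theorem are exactly the points $3-k$). The first step is to diagonalise $\AD$. Since $\delta_\mu(U_k)=ik_\mu U_k$ and $\D=-i\delta_\mu\ot\g^\mu$, the Dirac operator maps $U_k\ot\CH^2$ into itself, acting there as $I\ot(k_\mu\g^\mu)$; because $(k_\mu\g^\mu)^2=|k|^2 I_2$ we get $\D^2=|k|^2 I$ and hence $\AD=|k|$ on $U_k\ot\CH^2$, with $|k|=\sqrt{k_1^2+k_2^2+k_3^2}$. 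Thus $\set{U_k\ot e_j}$ is an eigenbasis of $\AD$, the kernel is spanned by $U_0\ot e_1,U_0\ot e_2$, and $\AD^{-s}$ is the diagonal operator $U_k\ot e_j\mapsto|k|^{-s}U_k\ot e_j$ off the kernel.

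Next I would reduce the trace to a lattice sum. A direct computation gives $\lcom\D,L(U_k)\ot I_2\rcom=L(U_k)\ot(k_\mu\g^\mu)$ and $\delta(L(U_k)\ot M)=(L(U_k)\ot M)\,D_k$, where $D_k$ is the diagonal operator with symbol $\sg_k(l):=|l+k|-|l|$; iterating, $\delta^n(L(U_k)\ot M)=(L(U_k)\ot M)D_k^{\,n}$. Consequently every $b\in\B$ is an infinite combination, weighted by the Schwartz coefficients of the algebra elements involved, of operators $(L(U_{q_1})\ot M_1)D_{q_1}^{n_1}\cdots(L(U_{q_r})\ot M_r)D_{q_r}^{n_r}$. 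Applied to $U_l$, such a monomial returns a multiple of $U_{q_1+\cdots+q_r+l}$, so in the orthonormal basis $\set{U_l}$ only the terms with $q_1+\cdots+q_r=0$ have nonzero diagonal entries; for those the $l$-dependent Weyl phase collapses (its $l$-part is $e^{-\frac i2(\sum_i q_i)\Th l}=1$), and the diagonal entry is a smooth function $f(l)$ built from products $\prod_i\sg_{q_i}(l+s_i)^{n_i}$ times $\tr_{\CH^2}(M_1\cdots M_r)$. Hence $\zeta_b(s)=\sum_{l\in\ZH^3\setminus\set0}f(l)\,|l|^{-s}$ for a function $f$ that I would argue is a classical symbol of order $\leq 0$ on $\RH^3$, with an asymptotic expansion $f\sim\sum_{j\geq0}f_{-j}$ into pieces $f_{-j}$ homogeneous of degree $-j$.

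The analytic core is then the meromorphic continuation of such lattice zeta functions. Truncating the symbol as $f=\sum_{j=0}^{J}f_{-j}+R_J$ with $R_J=O(|l|^{-J-1})$, the remainder contributes a term holomorphic on $\set{\operatorname{Re}s>3-J-1}$, while each homogeneous piece gives $\sum_{l\neq0}f_{-j}(\hat l)\,|l|^{-j-s}$. Expanding $f_{-j}$ on $S^2$ into spherical harmonics, the harmonics of degree $\geq1$ yield Epstein zeta functions attached to harmonic polynomials, which are entire, whereas the mean value over $S^2$ multiplies the ordinary Epstein zeta $Z_3(w)=\sum_{l\neq0}|l|^{-w}$; since $Z_3$ continues to $\CH$ with a single simple pole at $w=3$, the piece $f_{-j}$ contributes at most a simple pole, located at $s=3-j$. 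Letting $J\to\infty$ shows that $\zeta_b$ extends meromorphically with poles only among $\set{3-k:k\in\NH_0}$, all simple; checking that the leading residues do not all vanish (for $b=\pi(1)$ one gets $2Z_3(s)$, with its pole at $s=3$) shows that the dimension spectrum is exactly this set, all poles simple.

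The step I expect to be the genuine obstacle --- and the only place the hypothesis on $\Th$ is used --- is the control of the symbol $f$ and of the above continuation uniformly over the \emph{infinitely many} Fourier modes carried by a general $a=\sum_q a_q U_q\in\A_\Th$: the homogeneous components $f_{-j}$ and the remainders $R_J$ are themselves series in $q$ whose individual terms grow with $|q|$, and one must show they sum to honest symbols whose lattice zeta functions continue without spurious poles and without loss of simplicity. This is precisely what the general zeta-function machinery of \cite{iochum} supplies, and it is there that the Diophantine condition on $\tfrac1{2\pi}\Th$ becomes indispensable: it excludes the small-denominator phenomena that would otherwise destroy the meromorphic continuation, and thereby pins the dimension spectrum down to $\set{3-k:k\in\NH_0}$ with only simple poles, as asserted in Proposition 5.4 of \cite{iochum}.
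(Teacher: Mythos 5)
First, a point of context: the paper contains no proof of this theorem at all --- it is quoted verbatim from Proposition 5.4 of \cite{iochum}, so your sketch can only be measured against that reference. Your reduction is indeed the natural skeleton of the argument, and most of it is correct: the diagonalization of $\AD$ with eigenbasis $U_k \ot e_j$ and eigenvalue $|k|$, the identity $\delta^n(L(U_k)\ot M) = (L(U_k)\ot M)D_k^{\,n}$ with $D_k$ diagonal of symbol $|l+k|-|l|$, the selection rule that only monomials with $q_1+\cdots+q_r=0$ contribute to the trace, the collapse of the $l$-dependent Weyl phase on exactly those terms, and the continuation of $\sum_{l\neq 0} f(l)\,|l|^{-s}$ for a symbol $f$ via homogeneous expansion, spherical harmonics and Epstein zeta functions, giving simple poles confined to $\set{3-k:\,k\in\NH_0}$.

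The genuine flaw is in the step you yourself single out as ``the genuine obstacle.'' By your own phase-cancellation observation, in the algebra $\B$ as defined in this paper --- generated by $\delta^k(\pi(a))$ and $\delta^k(\lcom \D,\pi(a)\rcom)$, i.e.\ by \emph{left} multiplications only --- the diagonal entries carry no $l$-oscillating factor at all: the surviving phases $e^{-\frac{i}{2}q_i.\Th q_j}$ are unimodular constants, so the sums over the Fourier modes $q_i$ converge absolutely for \emph{every} skew-symmetric $\Th$, simply because Schwartz decay of the coefficients beats the polynomial growth of the symbol seminorms in $|q_i|$. There is no small-denominator phenomenon in this sector; your argument, completed, never invokes the hypothesis, which should have been a warning sign. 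The Diophantine condition is genuinely needed in Proposition 5.4 of \cite{iochum} because there the dimension spectrum is computed for a \emph{larger} algebra, containing also the opposite algebra $J\A_\Th J^{-1}$ (right multiplications), as required for spectral-action computations with fluctuations $\D \mapsto \D + A + JAJ^{-1}$. A mixed product such as $L(U_q)\,J L(U_q) J^{-1}$ is diagonal with diagonal entry proportional to $e^{i l.\Th q}$ --- a genuinely $l$-oscillating phase. For a single $q$ with $\Th q \notin 2\pi\ZH^3$ the function $\sum_l e^{il.\Th q} g(l)\,|l|^{-s}$ is entire, but its bounds blow up as $\Th q$ approaches $2\pi\ZH^3$; to sum over $q$ against Schwartz coefficients one needs a lower bound on the distance from $\frac{1}{2\pi}\Th q$ to $\ZH^3$ that degrades at most polynomially in $|q|$, and that is exactly what the Diophantine condition provides. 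So the small denominators you invoke are real, but they live in the mixed left-right sector which your reduction never meets; attributing them to the left-only $q$-sums is a wrong account of the analysis. The upshot, which your sketch misses, is that for the paper's own (left-only) definition of $\B$ the conclusion holds unconditionally, while the stated hypothesis is inherited from \cite{iochum} precisely because that proposition concerns the enlarged algebra.
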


In the sequel we always work with a deformation matrix $\Th$ such that $\frac{1}{2\pi} \Th$ is diophantine without mentioning it explicitly. Additionally, in order to keep the notation as simple as possible, we identify $a$ and $L(a)$ for all $a \in \A_\Th$. \\

Finally, we study the local index formula of $(\A_\Th, \H, \D)$. Due to the simplicity of the dimension spectrum the formulas for the cyclic cocycle $(\phi_3 , \phi_1 )$ simplify as follows
\begin{equation} \label{phi_3}
\phi_{3}(a^0,a^1,a^2,a^3) = \dfrac{1}{12} \, \tau_0 \left( a^0 \lcom \D, a^1 \rcom \lcom \D, a^2 \rcom \lcom \D, a^3 \rcom \AD^{-3}  \right), 
\end{equation}
and
\begin{align*}
\phi_{1} (a^{0}, a^{1})  =& \tau_0 \left(a^0 \lcom \D, a^1 \rcom \AD^{-1} \right) \\ 
	&- \dfrac{1}{4} \, \tau_0 \left(a^0 \nabla \left( \lcom \D, a^1 \rcom \right) \AD^{-3} \right) \\
	&+ \dfrac{1}{8} \, \tau_0 \left( a^0 \nabla^{2} \left( \lcom \D, a^1 \rcom \right) \AD^{-5} \right)
\end{align*}
for $a^0,a^1,a^2,a^3 \in \A$ and $\nabla(T)= \lcom \D^{2}, T \rcom $ for any $T$ in $ \A_\Th$ or $\lcom \D, \A_\Th \rcom $.

\begin{lem} \label{phi_1}
The cochain $\phi_1$ of the local index theorem vanishes for the triple $(\A_\Th, \H, \D)$. I. e.
\begin{equation*}
\phi_1(a^0da^1)= 0 \quad \fl a^0,a^1 \in \A_\Th
\end{equation*}
\end{lem}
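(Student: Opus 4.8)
The plan is to exploit the tensor-product structure $\H = \H_\tau \ot \CH^2$ together with the fact that the three Dirac matrices $\g^\mu$ are traceless; the whole statement will reduce to a parity obstruction on the spinor factor $\CH^2$. First I would record that $\D^2$ is scalar on that factor. Since the canonical derivations $\d_\mu$ pairwise commute and the matrices satisfy the Clifford relations $\g^\mu \g^\nu + \g^\nu \g^\mu = 2\d^{\mu\nu} I_2$, a direct computation gives
\[ \D^2 = -\d_\mu \d_\nu \ot \g^\mu \g^\nu = -\sum_\mu \d_\mu^2 \ot I_2 =: \Delta \ot I_2, \]
where $\Delta = -\sum_\mu \d_\mu^2$ acts on $\H_\tau$ as multiplication by $|k|^2$ on each Weyl element $U_k$. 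Hence every real power $\AD^r = \Delta^{r/2} \ot I_2$ is of the form (operator on $\H_\tau$) $\ot I_2$, i.e. it acts trivially on the spinor factor.

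Next I would compute the commutator. As $\d_\mu$ is a derivation, $\lcom \d_\mu, L(a^1) \rcom = L(\d_\mu(a^1))$, and therefore
\[ \lcom \D, a^1 \rcom = -i\, L(\d_\mu(a^1)) \ot \g^\mu, \]
where each summand carries exactly one factor $\g^\mu$ on $\CH^2$. The key observation is that $\nabla = \lcom \D^2, \cdot\, \rcom$ preserves this single-$\g^\mu$ structure: since $\D^2 = \Delta \ot I_2$ is scalar on the spinor factor, for any operator $X$ on $\H_\tau$ one has $\nabla(X \ot \g^\mu) = \lcom \Delta, X \rcom \ot \g^\mu$. By induction every $\nabla^j(\lcom \D, a^1 \rcom)$ is again a sum over $\mu$ of (operator on $\H_\tau$) $\ot \g^\mu$, for all $j \geq 0$.

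Finally I would insert everything into the trace. In each of the three terms of $\phi_1$ the operator inside $\tau_0$ has the shape $a^0 \nabla^j(\lcom \D, a^1 \rcom) \AD^{-(2j+1)}$ for $j = 0, 1, 2$. Left multiplication by $a^0 = L(a^0) \ot I_2$ and right multiplication by $\AD^{-(2j+1)} = \Delta^{-(2j+1)/2} \ot I_2$ touch only the $\H_\tau$ factor, so the whole operator remains a sum of terms $B_\mu \ot \g^\mu$. For $\mathrm{Re}\, z$ large the regularized operator is trace class and the trace factorizes over the tensor product,
\[ \tr\left( (B_\mu \Delta^{-z}) \ot \g^\mu \right) = \tr_{\H_\tau}\left( B_\mu \Delta^{-z} \right) \cdot \tr_{\CH^2}(\g^\mu) = 0, \]
because $\tr_{\CH^2}(\g^\mu) = 0$ for $\mu = 1, 2, 3$. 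Thus each zeta function $z \mapsto \tr(a^0 \nabla^j(\lcom \D, a^1 \rcom) \AD^{-(2j+1)-2z})$ vanishes identically, so does its residue $\tau_0$ at $z = 0$, and all three terms of $\phi_1$ are zero, giving $\phi_1(a^0 da^1) = 0$.

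I do not expect a serious analytic obstacle here, since the mechanism is a symmetry argument rather than a delicate estimate. The two points that require care are the factorization of the operator trace over $\H_\tau \ot \CH^2$ — legitimate once the regularized operator is trace class, for $\mathrm{Re}\, z$ sufficiently large — and the verification that $\nabla$ never produces terms carrying an even number of $\g$-matrices, which alone could survive the spinor trace; both are controlled by the single fact that $\D^2$ is scalar on $\CH^2$.
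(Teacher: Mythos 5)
Your proof is correct and follows essentially the same route as the paper: both exploit that $\D^2 = -\sum_\mu \d_\mu^2 \ot I_2$ is scalar on the spinor factor $\CH^2$, so every operator $a^0 \nabla^j\left(\lcom \D, a^1 \rcom\right) \AD^{-(2j+1)-2z}$ appearing in $\phi_1$ is a sum of terms each carrying exactly one traceless Dirac matrix $\g^\mu$, forcing the trace, and hence each residue $\tau_0$, to vanish. The only difference is presentational: where you argue by induction that $\nabla$ preserves the single-$\g^\mu$ structure, the paper computes $\nabla\left(\lcom \D, a^1 \rcom\right)$ and $\nabla^2\left(\lcom \D, a^1 \rcom\right)$ explicitly and reads off the same conclusion.
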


\begin{proof}
For any element $a = \sum_{k \in \ZH^3} a_k U_k$ we obtain 
\begin{equation*}
\lcom \D, a \rcom = \sum_{k \in \ZH^3} a_k \lcom \D, U_k \rcom = \sum_{k \in \ZH^3} a_k k_{\mu} U_k \ot \g^{\mu} = -i \d_{\mu}a \ot \g^{\mu}
\end{equation*}
The operator $\D^2$ has the form
\begin{equation*}
\D^2 = \left( -i \d_{\mu} \ot \g^{\mu} \right)^2 = - \delta_{\l}\d_{\mu} \ot \g^{\l} \g^{\mu} = - \sum_{\mu} \d_{\mu}^2 \ot I_2,
\end{equation*}
where the last identity follows by the fact that $\g^\l \g^\mu = - \g^\mu \g^\l$ for $\mu \neq \l$ and $\d_\mu \d_\nu = \d_\nu \d_\mu$. By an analogous calculation as above we obtain 
\begin{equation*}
\lcom \D^2, a \rcom = - \sum_{\mu}  \d_{\mu}^2 a + 2 \d_\mu a \d_\mu \ot I_2
\end{equation*}
Thus the individual terms of $\phi_1$ have the form
\begin{align*}
a^0 \lcom \D, a^1 \rcom =& -i a^0 \d_{\l} a^1 \ot \g^{\l} \\
a^0 \nabla \lcom \D, a^1 \rcom =& \quad i a^0 \sum_{\mu} \d_\mu^2 \d_\l a^1 \ot \g^\l + 2 \d_\mu \d_\l a^1 \d_\mu \ot \g^\l \\
a^0 \nabla^2 \lcom \D, a^1 \rcom =& \quad i a^0 \sum_{\mu} \nabla \left( \d_{\mu}^2 \d_{\l} a^1 \ot \g^\l + 2 \d_\mu \d_\l a^1 \d_\mu \ot \g^\l \right) \\
=&  -i a^0 \sum_{\mu, \nu} \d_\nu^2 \d_\mu^2 \d_\l a^1 \ot \g^\l + 2 \d_\nu^2 \d_\mu \d_\l a^1 \d_\mu \ot \g^\l \\
& \qquad+ 2 \d_\nu \d_\mu^2 \d_\l a^1 \d_\nu \ot \g^\l  + 4 \d_\nu \d_\mu \d_\l a^1 \d_\nu \d_\mu \ot \g^\l
\end{align*}
which are all operators on $\H$ with zero trace because the trace of the Dirac matrices $\g^{\mu}$ is zero. This yields $\phi_1 = 0$. 
\end{proof}

For the extended triple $( M_N(\A_{\Th}), \H \ot \CH^N, \D \ot I_N )$, which is the triple we are working with in the sequel, the derived results hold also true.

\medskip

\subsection{Chern-Simons action}

We extend the left-multiplication $L \ot I_2$ of the previous subsection to a representation of $\Omega(\A_\Th)$ by 
\begin{equation*}
a^0da^1 \cdots da^n \mapsto a^0 \lcom \D, a^1 \rcom \cdots \lcom \D, a^n \rcom \,
\end{equation*}
which we denote also with $L$. Although this map defines a $\ast$-representation of $\Omega(\A_\Th)$ as a $\ast$-algebra (see \cite{connes} chapter VI, proposition 4), it fails to be a representation of the differential structure, i. e. $L(da) = \lcom \D , L(a) \rcom$ does not hold true in general.\\
Let $A= \sum_i a_i db_i$ be a hermitian $N \times N$-matrix of $1$-forms, and $A_\l = \sum_i a_i \d_\l b_i$. In addition, let denote the extension of $L$ on $M_N(\Omega(\A_\Th)) \cong M_N(\CH) \ot \Omega(\A_\Th)$ also with $L$. Then we obtain
\begin{equation*}
L(A) = -i A_\l \ot \g^\l \, ,
\end{equation*}
with $A_\l^\ast = - A_\l$ because the $1$-form $A$ is assumed to be hermitian. In order to compute the Chern-Simons action of definition \ref{CS} we need to know $L(dA)$ too. 
\begin{align*}
L(dA) =& \sum_i \lcom \D, a_i \rcom \lcom \D, b_i \rcom \\
=& - \sum_i \d_\l a_i \d_\mu b_i \ot \g^\l \g^\mu  \\
=& - \sum_i \d_\l (a_i \d_\mu b_i ) \ot \g^\l \g^\mu - a_i \d_\l \d_\mu b_i \ot \g^\l \g^\mu \\
=& -  \d_\l A_\mu \ot \g^\l \g^\mu - \sum_\l \sum_i a_i \d_\l^2 b_i \ot I_2 \, ,
\end{align*}

where the last row follows from the previous one by the relations $\d_\l \d_\mu = \d_\mu \d_\l$ and $\g^\l \g^\mu = - \g^\mu \g^\l$  if $\l \neq \mu$. Summarizing these insights and combining them with lemma \ref{phi_1} and equation \eqref{phi_3} yields
\begin{align*}
& S_{CS}(A) = 6 \pi k \phi_3 \left(AdA + \dfrac{2}{3} A^3 \right) \\
&= \dfrac{\pi k}{2} \tau_0 \left( \left( L(A) L(dA) + \dfrac{2}{3} L(A)^3 \right) \AD^{-3} \right) \\
&= \dfrac{\pi k}{2} \res_{z=0} \tr_{\H \ot \CH^N} \left( i \left(  A_\l \d_\mu A_\nu +  \dfrac{2}{3} A_\l A_\mu A_\nu \right)  \ot \g^\l \g^\mu \g^\nu \AD^{-3-2z} \right) \\
&+ \dfrac{\pi k}{2} \res_{z=0} \tr_{\H \ot \CH^N} \left( \left( i A_\l \left(\sum_\mu \sum_i a_i \d_\mu^2 b_i \right) \ot \g^\l \right) \AD^{-3-2z} \right) \\
&= - \pi k \e^{\l \mu \nu} \res_{z=0} \tr_{\H_\tau \ot \CH^N} \left( \left( A_\l \d_\mu A_\nu  +  \dfrac{2}{3} A_\l A_\mu A_\nu  \right) \left( - \d_1^2 - \d_2^2 - \d_3^2 \right)^{-3/2-z} \right) \, ,
\end{align*}

where the last row follows by the previous one by the following facts: $\H = \H_\tau \ot \I_2$, with the Hilbert space $\H_\tau$ of the GNS construction \eqref{GNS}, $\tr_{\CH^2}(\g^\l \g^\mu \g^\nu) = 2i \e^{\l \mu \nu}$, $\tr_{\CH^2}(\g^\l) = 0 $ and $\D^2 = - \d_1^2 -\d_2^2 - \d_3^2 \ot I_2$.\\
By definition of $\A_\Th$ or its extended version $M_N (\A_\Th)$ respectively, there is a rapidly decreasing sequence $\left( a^\l_{k} \right)_{k \in \ZH^3}$ of complex $N \times N$-matrices $ a^\l_k = \left( a^{\l, ij}_k \right) $, the Fourier coefficients, with 
\begin{equation} \label{fourier} 
A_{\l} = \sum_{k \in \ZH^3} a^\l_{k}U_{k}  
\end{equation} 
for $\l = 1,2,3$. Due to $A_\l^\ast = - A_\l$ we have $\left( a^\l_k \right)^\ast = - a_{-k}^{\l}$ for all $k \in \ZH^3$.

\begin{rem} \label{non_self} In anticipation of the Feynman rules of section four we exclude self-interactions in our field theory. This means that loops which contain vertices whose legs are interconnected do not contribute anything to the loop expansion series of theorem \ref{partition}. \\
This can be achieved by the setting $a^\l_0 = 0$, for $\l = 1,2,3$, in equation \eqref{fourier}.
\end{rem}

Using the Fourier decomposition \eqref{fourier} we can proceed with our computation and obtain
\begin{align*}
A_\l \d_\mu A_\nu &= \sum_{\begin{tiny}
\begin{array}{c}
q \in \ZH^3 \setminus \set{0}, n \in \ZH^3 \\ 
n-q \neq 0
\end{array} \end{tiny}} a^\l_{n-q} (iq_\mu) a^\nu_q \exp \left( \frac{-i}{2} n. \Th q \right)  U_n \\
A_\l A_\mu A_\nu &= \sum_{\begin{tiny}
\begin{array}{c}
r \in \ZH^3 \setminus \set{0}, n,q \in \ZH^3 \\ 
n - q,q-r \neq 0
\end{array} \end{tiny}} a^\l_{n-q} a^\mu_{q-r} a^\nu_{r} \exp \left( \frac{-i}{2} q. \Th r \right) \exp \left( \frac{-i}{2} n. \Th q \right) U_n
\end{align*}

We fix an orthonormal basis $(E_l \ot e_j)_{l,j}$ of the Hilbert space $\H_\tau \ot \CH^N $, where $ E_l := U_l$, for $l \in \ZH^3$, and $(e_j)_{j = 1, \ldots, N}$ is the canonical basis of $\CH^{N}$. $ E_l \ot e_j $ is an eigenvector of $- \d_1^2- \d_2^2 - \d_3^2$ with eigenvalue $ \mid l \mid ^2 := l_1^2 + l_2^2 + l_3^2$. We compute
\begin{align*}
&- \pi k \, \e^{\l \mu \nu} \res_{z=0} \, \tr_{\H_\tau \ot \CH} \left( A_\l \d_\mu A_\nu (-\d_1 -\d_2 -\d_3)^{-3/2-z} \right) \\
=& - \pi k \, \e^{\l \mu \nu} \res_{z=0} \sum_j \sum_l \braket{E_l \ot e_j}{A_\l \d_\mu A_\nu (-\d_1 -\d_2 -\d_3)^{-3/2-z} E_l \ot e_j} \\
=&- \pi k \, \e^{\l \mu \nu} \res_{z=0} \sum_{j} \sum_l \mid l \mid^{-3-2z} \times \\
&\left\langle E_l \ot e_j , \sum_{\begin{tiny}
\begin{array}{c}
q \in \ZH^3 \setminus \set{0}, n \in \ZH^3 \\ 
n-q \neq 0
\end{array} \end{tiny}}  a^\l_{n-q} (iq_\mu) a^\nu_q \exp \left( \frac{-i}{2} n. \Th q \right)  U_{n} (E_l \ot e_j) \right\rangle \\
=&- \pi k \,  \e^{\l \mu \nu} \res_{z=0} \sum_l \mid l \mid^{-3-2z} \sum_{q \in \ZH^3 \setminus \set{0}} \tr_{\CH^{N}} \left( a^\l_{-q} (iq_\mu) a^\nu_{q} \right) \\
=& \quad 2 \pi^2  k \, \e^{\l \mu \nu} \sum_{q \in \ZH^3 \setminus \set{0}}  \ov{a^{\l, ij}_{q}} (iq_\mu) a^{\nu, ij}_q \\
=& \quad 2 \pi^2 i k \sum_{q \in \ZH^3 \setminus \set{0}}  \left( \ov{a_{q}^{1,ij}}, \ov{a_{q}^{2,ij}}, \ov{a_{q}^{3,ij}} \right)   \left( \begin{array}{ccc}
0 & -q_3 & q_2 \\ 
 q_3 & 0 & -q_1 \\ 
-q_2 & q_1 & 0
\end{array}  \right) \left( \begin{array}{c}
a_{q}^{1,ij} \\ 
a_{q}^{2,ij} \\ 
a_{q}^{3,ij}
\end{array} \right)
\end{align*}

where the last line follows by the fact that $\res_{z=0} \sum_l \mid l \mid^{-3-2z} = 2 \pi$, due to theorem 2.1 \cite{iochum}. The calculations for the cubic term are analogous and we obtain
\begin{align*}
& - \pi  k \, \e^{\l \mu \nu} \res_{z=0} \, \tr_{\H_{\tau} \ot \CH^{N}} \left( A_\l A_\mu A_\nu (-\d_1-\d_2-\d_3)^{-3/2-z} \right) \\
=&- 2 \pi^2  k \, \epsilon^{\l \mu \nu} \sum_{\begin{tiny}
\begin{array}{c}
q,r \in \ZH^3 \setminus \set{0} \\ 
q-r \neq 0
\end{array} \end{tiny}}  \exp \left( \frac{-i}{2} q. \Th r \right) a^{\l, ij}_{-q} a^{\mu,jm}_{q-r} a^{\nu,mi}_{r} 
\end{align*}

Summarizing these results yields the final expression for the Chern-Simons action on the noncommutative 3-torus.

\begin{thm} \label{chern_simons_ac_3_torus}
Let $A \in M_N(\Omega^{1}(\A_{\Th}))$ be a hermitian matrix of 1-forms which fulfils the condition of remark \ref{non_self}, and 
\begin{align*}
 L: M_N (\Omega(\A_\Th)) & \ra \B \left( \H \ot \CH^N \right) \\
 a^0da^1 \cdots da^n & \mapsto a^0 \lcom \D, a^1 \rcom \cdots \lcom \D, a^n \rcom 
\end{align*}
an extension of the left multiplication $L \ot I_2 : \A_\Th \ra \B(\H)$. Then, there are skew hermitian elements
\begin{equation*}
A_\l = \sum_{k \in \ZH^3 \setminus \set{0}} a^\l_k U_k \, ,
\end{equation*}
with a rapidly decreasing series of $N \times N$-matrices $a^\l_k=  \left( a^{\l, ij}_k \right) $, fulfilling $a_k^{\l \ast} = - a^\l_{-k}$ for all $k \in \ZH \setminus \set{0}$, such that
\begin{equation*}
L(A) = -i A_\l \ot \g^\l \, 
\end{equation*}
and the Chern-Simons action of definition \ref{CS} is given by the matrix coefficients as follows: 
\begin{align*} 
S_{CS}(A)=& \quad 2 \pi^2 i k \sum_{q \in \ZH^3 \setminus \set{0}} \left( \ov{a_{q}^{1,ij}}, \ov{a_{q}^{2,ij}}, \ov{a_{q}^{3,ij}} \right)   \left( \begin{array}{ccc}
0 & -q_3 & q_2 \\ 
 q_3 & 0 & -q_1 \\ 
-q_2 & q_1 & 0
\end{array}  \right) \left( \begin{array}{c}
a_{q}^{1,ij} \\ 
a_{q}^{2,ij} \\ 
a_{q}^{3,ij}
\end{array} \right) \\
&- 2 \pi^2  k \, \e^{\l \mu \nu} \sum_{\begin{tiny}
\begin{array}{c}
q,r \in \ZH^3 \setminus \set{0} \\ 
q-r \neq 0
\end{array} \end{tiny}}  \exp \left( \frac{-i}{2} q. \Th r \right) a^{\l, ij}_{-q} a^{\mu,jm}_{q-r} a^{\nu,mi}_{r} 
\end{align*}
\end{thm}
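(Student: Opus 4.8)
The plan is to read the claimed formula directly off Definition \ref{CS} by computing the quadratic contribution $\phi_3(AdA)$ and the cubic contribution $\phi_3(A^3)$ separately and then collecting them. The summand $-2\pi k\,\phi_1(A)$ drops out immediately, since Lemma \ref{phi_1} establishes $\phi_1 \equiv 0$ for the triple $(\A_\Th, \H, \D)$, and hence for its matrix extension. So at the very start I would reduce the definition to $S_{CS}(A) = 6\pi k\,\phi_3\bigl(AdA + \tfrac{2}{3}A^3\bigr)$ and replace $\phi_3$ by its simplified single-residue form \eqref{phi_3}, which is available here precisely because the dimension spectrum is simple.

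Next I would pass from the cyclic cochain to an operator residue-trace through the representation $L$. I compute $L(A) = -i A_\l \ot \g^\l$ and expand $L(dA)$ by the Leibniz rule, observing that it splits into a genuine two-form piece $-\d_\l A_\mu \ot \g^\l \g^\mu$ and a scalar correction $-\sum_\l \sum_i a_i \d_\l^2 b_i \ot I_2$. Inserted into $\phi_3$, the scalar correction multiplies $L(A)$ to give a term carrying a single gamma matrix, whose $\CH^2$-trace vanishes since $\tr_{\CH^2}(\g^\l) = 0$; only the three-gamma pieces survive. Applying $\tr_{\CH^2}(\g^\l \g^\mu \g^\nu) = 2i\e^{\l\mu\nu}$ then collapses the whole expression to the residue of a trace over $\H_\tau \ot \CH^N$ against $(-\d_1^2 - \d_2^2 - \d_3^2)^{-3/2-z}$, the restriction of $\AD^{-3-2z}$ to the scalar factor of the Hilbert space.

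The computational core is to insert the Fourier decomposition \eqref{fourier}, $A_\l = \sum_k a^\l_k U_k$ with $a^\l_0 = 0$ by Remark \ref{non_self}, and to evaluate the trace in the eigenbasis $(E_l \ot e_j)$ of $-\d_1^2 - \d_2^2 - \d_3^2$, using $(-\d_1^2 - \d_2^2 - \d_3^2) E_l = \mid l \mid^2 E_l$. Taking diagonal matrix elements $\braket{E_l \ot e_j}{U_n (E_l \ot e_j)}$ forces the shift operator $U_n$ in each product to carry $n = 0$, so the surviving scalar sum is exactly $\sum_l \mid l \mid^{-3-2z}$; this factors out and its residue $\res_{z=0} \sum_l \mid l \mid^{-3-2z} = 2\pi$ is supplied by theorem 2.1 of \cite{iochum}. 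For the quadratic term this produces the antisymmetric-matrix bilinear form in the $a^\l_q$ displayed in the statement, and for the cubic term the analogous triple sum carrying the phase $\exp(-\tfrac{i}{2} q.\Th r)$ — the factor $\exp(-\tfrac{i}{2} n.\Th q)$ becoming trivial once $n = 0$ — together with the matrix-index contractions over $i, j, m$.

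I expect the main obstacle to be the bookkeeping in the cubic term: correctly propagating the two noncommutative phase factors through the triple product, respecting the index constraints $q, r \in \ZH^3 \setminus \set{0}$ and $q - r \neq 0$ forced by $a^\l_0 = 0$, and verifying that the Einstein summation over the matrix indices reproduces the cyclic trace structure $a^{\l, ij}_{-q} a^{\mu, jm}_{q-r} a^{\nu, mi}_r$. In the quadratic term one additionally uses the skew-hermiticity relation $(a^\l_k)^\ast = -a^\l_{-k}$ to rewrite the coefficient $a^\l_{-q}$ as the complex conjugate $\ov{a^\l_q}$, recasting that piece as the antisymmetric bilinear form displayed. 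Once these sign and phase issues are settled, assembling the two contributions into the stated formula is merely a matter of collecting the displayed expressions.
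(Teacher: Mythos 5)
Your proposal is correct and follows essentially the same route as the paper's own computation: reduce to $6\pi k\,\phi_3(AdA+\tfrac{2}{3}A^3)$ via Lemma \ref{phi_1}, split $L(dA)$ by the Leibniz rule into a two-gamma piece plus a scalar correction annihilated by $\tr_{\CH^2}(\g^\l)=0$, collapse the Clifford traces with $\tr_{\CH^2}(\g^\l\g^\mu\g^\nu)=2i\e^{\l\mu\nu}$, and evaluate the residue trace in the Weyl eigenbasis, where diagonality forces $n=0$ and $\res_{z=0}\sum_l \mid l \mid^{-3-2z}=2\pi$ supplies the overall constant. The only slight imprecision is in the last step: skew-hermiticity gives $a^{\l,ij}_{-q}=-\ov{a^{\l,ji}_q}$ (a sign and a transpose, both absorbed in the trace contraction and the prefactor), not literally $a^\l_{-q}=\ov{a^\l_q}$, but this does not affect the argument.
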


\section{path integral}

We introduce and compute a noncommutative replacement 
\begin{equation} \label{path_integral}
Z(k) = \int D  \lcom A \rcom \, \exp \left(i S_{CS}(A) \right) \, , 
\end{equation}
of the path integral \eqref{eq_1.2}, where the Chern-Simons functional is integrated over all equivalence classes of hermitian matrices of $1$-forms $A \in M_N(\Omega(\A_\Th))$ modulo gauge transformations $ A \mapsto u A u^\ast + udu^\ast $. We start with an interpretation of the measure $D \lcom A \rcom$.\\
From theorem \ref{chern_simons_ac_3_torus} one can read off that the Chern-Simons action depends only on the variables $a^{\l, ij}_k$, with $\l =1,2,3$, $i,j = 1, \ldots, N$ and $k \in \ZH^3 \setminus \set{0}$. Thus the infinite product of Lebesgue measures
\begin{equation*}
D A = \prod_{i,j=1}^N \prod _{\l = 1}^3 \prod_{q\in \ZH^3 \setminus \set{0}} da^{\l,ij}_{q}
\end{equation*}
seems to be the natural choice for our noncommutative path integral. If we used the measure $ D A$, we would integrate over all possible hermitian matrices of $1$-forms $A$, not only over the equivalence classes with respect to gauge transformation. In order to avoid this we have to break the gauge symmetry. 

\medskip

\subsection{Gauge-breaking}

We choose the Lorentzian gauge fixing, i. e. $ h(A)  - \omega = 0 $, with $h(A) = -i \d_\l A_\l $ and $\omega \in M_N(\B(\H))$. If one applies the standard techniques of Faddeev and Popov (thoroughly discussed in the lecture notes \cite{jesper} of A. Ambj\o rn and J. L. Petersen) by introducing ghost-fields $c,\oc$ and the Lautrup-Nakanishi field $B$, one achieves the following form of \eqref{path_integral}
\begin{equation} \label{gauge_breaking}
\int DB D \oc Dc DA \, \exp \left(i S_{CS}(A)+ \pi^2 \ncint \left(i (\d_\mu A_\mu)B - i \oc \d_\mu D_\mu c \right)\AD^{-3} \right) \, ,
\end{equation}
with the covariant derivative $D_\mu = \d_\mu + \lcom A_\mu , \cdot \rcom$. The additional ingredients of the formula are defined as follows:
\begin{itemize}
\item \begin{equation*}
\ncint T := \res_{z=0} \tr \left( T \AD^{-z} \right)
\end{equation*}
denotes the Wodzicki residue of an operator $T$. The additional factor $\pi^2$ in front of $\ncint$ is a normalization factor in order to make the residue in the commutative case, i. e. $\Th=0$, identical with the ordinary volume integral on the $3$-dimensional torus $\mathbb{T}^3$.
\item The ghost-fields $c$ and $ \oc $, with their Fourier decompositions
\begin{equation*}
\oc = \sum_{q \in \ZH^3 \setminus \set{0}} \oc_{-q} U_q \qquad c = \sum_{q \in \ZH^3 \setminus \set{0}} c^{T}_q U_q \, ,
\end{equation*}
 where $\oc_q$ and $c_q $ are $N \times N$-matrices of anticommuting Grasmann numbers for all $q \in \ZH^3 \setminus \set{0}$. $c^T_q$ denotes the transpose of $c_q$.
\item The Lautrup-Nakanishi field $B$ is a hermitian element of $M_N(\A_\Th)$ with Fourier decomposition
\begin{equation*}
B =  \sum_{q \in \ZH^3 \setminus \set{0}} i B_q U_q \, ,
\end{equation*}
where $B_q$ are $N \times N$-matrices of rapid decay fulfilling $B_q^\ast = - B_q$ for all $q \in \ZH^3 \setminus \set{0}$.
\end{itemize}

Next we introduce external source fields $J_{q}^{\mu, ij}, \xi^{ij} , \xi^{\ast ij} $ in order to create a generating functional for \eqref{gauge_breaking}. We think of $J_{q}^{ \mu, ij}$ being ordinary complex variables, with $\ov{J_q^{\mu, ij}}= J_{-q}^{\mu, ji} $ for all $q \in \ZH^3 \setminus \set{0}$, instead of $\xi^{ij}$ and $\xi^{\ast ij}$ which are Grassmann numbers. Adding external fields turns the path integral \eqref{gauge_breaking} into a Gaussian integral which can be computed as in the finite dimensional case (see G. B. Folland's book \cite{folland} for further details). The Gaussian integral decouples in the indices $i,j = 1, \ldots, N$ and we can be assemble them. We obtain the following result.

\begin{thm} \label{partition}
The partition function
\begin{equation*}
Z(K) = \int  D \lcom A \rcom e^{i S_{CS}(A)},
\end{equation*} 
where we integrate over all equivalence classes $\lcom A \rcom $ of hermitian $N \times N$-matrices  $A \in M_N(\Omega^1(\A_\Th))$ of $1$-forms modulo gauge transformations, is given by the formula:
\begin{align*}
& \quad  \sum_{n = 0}^{\infty} \dfrac{1}{n!} \prod_{g=1}^{n} -2 \pi^2 i  k \, N^{3/2}  \e^{\l_g \mu_g \nu_g} \sum_{\begin{tiny}
\begin{array}{c}
q^g,r^g \in \ZH^3 \setminus \set{0} \\ 
q^g-r^g \neq 0
\end{array} \end{tiny}} \sin \left( \frac{1}{2} q^g. \Th r^g \right) \dfrac{\partial}{2i \partial J_{q^g}^{\l_g}} \dfrac{\partial}{2i \partial J_{r^g-q^g}^{\mu_g}}\dfrac{\partial}{2i \partial J_{-r^g}^{\nu_g}}\\
&\times \sum_{m=0}^{\infty} \dfrac{1}{m!} \prod_{h=1}^{m}  -16 \pi^3 i \, N^{3/2} \sum_{\begin{tiny}
\begin{array}{c}
q^h,r^h \in \ZH^3 \setminus \set{0} \\ 
q^h-r^h \neq 0
\end{array} \end{tiny}} \sin \left( \frac{1}{2} q^h. \Th r^h \right)  q^h_\mu \dfrac{\partial}{i \partial \xi_{q^h}}  \dfrac{\partial}{2i \partial J^\mu_{r^h-q^h}} \dfrac{\partial}{i \partial \xi^\ast_{r^h}}\\
& \left. \exp\left( \sum_{q \in \ZH^3 \setminus \set{0}} \dfrac{  \e^{\l \mu \nu}}{2 \pi^2 k  \mid q \mid^2 } J_{-q}^\l q_\mu J^\nu_q \right) \exp \left( \sum_{q \in \ZH^3 \setminus \set{0}} \dfrac{-i}{8 \pi^3  \mid q \mid ^2} \xi^\ast_q \xi_q \right) \right|_{ J,\xi^{\ast},\xi = 0}
\end{align*}
We call it the \textit{loop expansion series} of the partition function. Every single odd term of this expansion vanishes.
\end{thm}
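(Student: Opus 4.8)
The plan is to exploit that the gauge-field generating functional in Theorem \ref{partition} is an \emph{even} function of the bosonic sources, so that applying an odd number of source derivatives and then setting all sources to zero returns zero. Concretely, I would isolate the two generating functionals
\[
Z_A[J] = \exp\left( \sum_{q \in \ZH^3 \setminus \set{0}} \frac{\e^{\l \mu \nu}}{2 \pi^2 k \mid q \mid^2} J_{-q}^\l q_\mu J^\nu_q \right), \qquad Z_{gh}[\xi^\ast,\xi] = \exp\left( \sum_{q \in \ZH^3 \setminus \set{0}} \frac{-i}{8 \pi^3 \mid q \mid^2} \xi^\ast_q \xi_q \right),
\]
and note that $Z_A$ depends only on the variables $J^{\mu}_q$ through a quadratic form, so $Z_A[-J] = Z_A[J]$; hence every partial derivative of $Z_A$ of odd total order vanishes at $J = 0$.

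Next I would count the number of $J$-derivatives carried by the $(n,m)$-summand. Each of the $n$ cubic vertices supplies exactly three derivatives $\tfrac{\partial}{2i\partial J^{\l_g}_{q^g}}$, $\tfrac{\partial}{2i\partial J^{\mu_g}_{r^g-q^g}}$, $\tfrac{\partial}{2i\partial J^{\nu_g}_{-r^g}}$, while each of the $m$ ghost vertices supplies a single $J$-derivative $\tfrac{\partial}{2i\partial J^{\mu}_{r^h-q^h}}$ together with two Grassmann derivatives $\partial_{\xi}$, $\partial_{\xi^\ast}$ acting only on $Z_{gh}$. Since the prefactors (the factors $\e$, $\sin$, $q_\mu$ and the numerical constants) carry no source dependence, and $Z_{gh}$ does not depend on $J$, the total order of $J$-differentiation applied to $Z_A$ is $3n + m$. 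The decisive step is then immediate: by evenness of $Z_A$ the $(n,m)$-term vanishes whenever $3n + m$ is odd, that is whenever $n + m$ is odd (as $3n \equiv n \bmod 2$). The Grassmann sector cannot spoil this count, since each ghost vertex contributes one $\partial_\xi$ and one $\partial_{\xi^\ast}$, so the $m$ ghost derivatives always match the pairing $\xi^\ast_q \xi_q$ of $Z_{gh}$ for every $m$ and impose no parity constraint of their own.

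Finally I would identify the vanishing contributions with the odd terms of the series. Tracking the powers of the level $k$, each cubic vertex supplies one factor $k$ and each of the $\tfrac{1}{2}(3n+m)$ gauge propagators produced by $Z_A$ supplies one factor $k^{-1}$, whereas the ghost vertices and ghost propagators carry none; thus the $(n,m)$-summand is of order $k^{\,n-(3n+m)/2}=k^{-(n+m)/2}$. A summand with $n+m$ odd therefore sits at a half-integer power of $k$, that is, at an odd power of $k^{-1/2}$, and these are exactly the odd terms of the loop expansion. By the previous paragraph they all vanish, so $Z(k)$ reduces to a genuine power series in the coupling $k^{-1}$.

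The main obstacle I anticipate is not the parity argument but the bookkeeping that legitimizes it: one must verify that the derivatives displayed in Theorem \ref{partition} are the only sources of $J$-dependence, that the bosonic parity count is insensitive to the order in which the Grassmann derivatives are applied, and that evaluating the infinite-variable Gaussian functional at $J=0$ after differentiation is well defined summand by summand. All of these become routine once Theorem \ref{partition} is read as a formal power series in which only finitely many derivatives act within each $(n,m)$-summand, so that in each summand one is differentiating an ordinary, even Gaussian in finitely many active source variables.
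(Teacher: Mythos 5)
Your parity argument for the final clause of the theorem is correct, and it is essentially the standard Gaussian-moment argument: the bosonic generating functional is an even function of $J$, the $(n,m)$-summand applies exactly $3n+m$ derivatives in $J$, so it dies at $J=0$ unless $3n+m$ (equivalently $n+m$) is even; the ghost sector imposes no extra parity constraint since each ghost vertex carries one $\partial_\xi$ and one $\partial_{\xi^\ast}$; and the power count $k^{\,n-(3n+m)/2}=k^{-(n+m)/2}$ correctly identifies the vanishing summands with the odd terms of the expansion.

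The genuine gap is that this is only the last sentence of the theorem. The principal claim is the displayed identity between $Z(k)$ and the loop expansion series, and your proposal assumes precisely that identity as its starting point: you ``isolate the two generating functionals'' and count ``the derivatives displayed in Theorem \ref{partition}'', i.e.\ you work entirely downstream of the formula to be established. What is missing is the derivation the paper performs before stating the theorem: (i) reading off from Theorem \ref{chern_simons_ac_3_torus} that the action depends only on the Fourier modes $a^{\l,ij}_q$, so that $D A$ can be interpreted as the infinite product of Lebesgue measures $\prod da^{\l,ij}_q$; (ii) breaking the gauge symmetry by the Lorentz gauge $h(A)=-i\d_\l A_\l$ via the Faddeev--Popov procedure, which introduces the ghosts $c,\oc$ and the Lautrup--Nakanishi field $B$ and yields the gauge-fixed integrand \eqref{gauge_breaking}; (iii) adding the sources $J,\xi,\xi^\ast$ and carrying out the resulting Gaussian integrals over $A$, $B$, $c$, $\oc$ --- this is the step that actually produces the propagators $\e^{\l\mu\nu}q_\mu/(2\pi^2 k\mid q\mid^2)$ and $i/(8\pi^3\mid q\mid^2)$, and it is nontrivial because the quadratic part of $S_{CS}$ alone is degenerate along gauge directions and becomes invertible only after step (ii); and (iv) rewriting the cubic term of $S_{CS}$ and the ghost interaction $\oc\,\d_\mu\lcom A_\mu,c\rcom$ as derivative operators in the sources and expanding both exponentials, which is what generates the double series over $n$ and $m$ with the stated vertex factors (including the $N^{3/2}$ normalization from assembling the matrix indices $i,j$). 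Without these steps you have verified a property of the stated formula, not proven that the partition function is given by it.
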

  
Theorem \ref{partition} implies the following Feynman rules:
\begin{itemize} 
\item Two different kind of vertices
\begin{eqnarray*}
&\Diagram { \vertexlabel_{\l_g} \\ fd  \\
& f \vertexlabel_{\mu_g} \\
\vertexlabel_{\nu_g} fu \\
} \quad V^{A_{\l_g}A_{\mu_g}A_{\nu_g}}(q^g,r^g) &= - 2 \pi ^2 i k N^3 \, \e^{\l_g \mu_g \nu_g} \exp \left(- \frac{i}{2}q^g. \Th r^g \right) \\ && \\ && \\
&\Diagram { \vertexlabel^{c^h} \\ gd  \\
& f \vertexlabel^{\mu}  \\
\vertexlabel_{c^{\ast h}}  gu \\
} \quad V^{c^h A_{\mu} c^{\ast h}}(q^h,r^h) &= - 16 \pi^3 i N^3 \, \sin \left( \frac{1}{2} q^h. \Theta r^h \right) q^h_{\mu} \\
\end{eqnarray*}
\item Two different kind of propagators
\begin{eqnarray*}
&\feyn{\vertexlabel^{\l} !{f}{q_{\mu}}  \vertexlabel^{\nu}} \quad G^{A_{\l}A_{\nu}}(q) &= \dfrac{1}{8 \pi^2 k}\dfrac{\e^{\l \mu \nu} q_{\mu}}{\mid q \mid^2} \\
&\feyn{ \vertexlabel^{c} \,  g \, \vertexlabel^{c^{\ast}} } \quad  G^{cc^{\ast}}(q) &= \dfrac{i}{8 \pi^3} \dfrac{1}{\mid q \mid^2}
\end{eqnarray*}
\end{itemize} 
In the following subsection we compute the coefficient of $k^{-1}$ in the loop expansion series, which is given by the contributions of all $2$-loops. $2$-loops are built up by two vertices where every edge of the first vertex is linked with one edge of the other vertex by an appropriate propagator line.

\medskip

\subsection{$2$-loops}

Firstly, we investigate all loops which can be built up from these two vertices 
\begin{equation*}
\Diagram { \vertexlabel^{\l_1} \\ fd  \\
& f \vertexlabel_{\mu_1} \\
\vertexlabel_{\nu_1} fu \\
} \qquad \Diagram { \vertexlabel^{\l_2} \\ fd  \\
& f \vertexlabel_{\mu_2} \\
\vertexlabel_{\nu_2} fu \\
}
\end{equation*}\\
Calculating the contribution of all $2$-loops which can be constructed from these two vertices and summing up the results means nothing else than calculating the expression
\begin{align*} 
& \dfrac{1}{2} (-2 \pi^2 i k)^2 \, N^3 \, \e^{\l_1 \mu_1 \nu_1} \e^{\l_2  \mu_2 \nu_2} \sum_{\begin{tiny}
\begin{array}{c}
q^i,r^i \in \ZH^3 \setminus \set{0} \\ 
q^i-r^i \neq 0
\end{array} \end{tiny}} \sin \left( \frac{1}{2} q^1. \Th r^1 \right)  \sin \left(\frac{1}{2} q^2. \Th r^2 \right)   \\
&\dfrac{\partial}{2i \partial J_{q^1}^{\l_1}} \dfrac{\partial}{2i \partial J_{r^1-q^1}^{\mu_1}}\dfrac{\partial}{2i \partial J_{-r^1}^{\nu_1}} \dfrac{\partial}{2i \partial J_{q^2}^{\l_2}} \dfrac{\partial}{2i \partial J_{r^2-q^2}^{\mu_2}}\dfrac{\partial}{2i \partial J_{-r^2}^{\nu_2}} \label{loop} \\
&\left. \exp\left( \sum_{q \in \ZH^3 \setminus \set{0}} \dfrac{\e^{\l \mu \nu}}{2 \pi^2 k  \mid q \mid^2 } J_{-q}^{\l}q_{\mu}J^{\nu}_q \right) \right|_{ J = 0} \nonumber
\end{align*}
In order to carry out the computation we connect every leg of the first vertex with a leg of the other one. This can be done in six different ways and one obtains always 
\begin{align*}  
 L =& \dfrac{1}{2}  (2 \pi^2 i k)^2 \, N^3 \, \e^{\l_1 \mu_1 \nu_1} \e^{\l_2  \mu_2 \nu_2}  \sum_{\begin{tiny}
\begin{array}{c}
r,q \in \ZH^3 \setminus \set{0} \\ 
r-q \neq 0
\end{array} \end{tiny}} \sin^2 \left( \frac{1}{2} q. \Th r \right) \nn \\ 
& \times \dfrac{  \e^{\l_2 \mu_5 \l_1} (-q_{\mu_5})}{8 \pi^2 k \mid q \mid^2} \dfrac{ \e^{\mu_2 \mu_4 \mu_2} (q_{\mu_4} - r_{\mu_4})}{8 \pi^2 k \mid q-r \mid^2 }\dfrac{\e^{\nu_2 \mu_3 \nu_1} r_{\mu_3} }{8 \pi^2 k \mid r \mid ^2}  \, ,
\end{align*}
The $2$-loop $L$ remains unchanged under the substitutions $r \ra -r $ and $q \ra -q$, and we obtain
\begin{align*}
L =& \dfrac{1}{2}  (2 \pi^2 i k)^2 \, N^3 \, \e^{\l_1 \mu_1 \nu_1} \e^{\l_2  \mu_2 \nu_2}  \sum_{\begin{tiny}
\begin{array}{c}
-r,-q \in \ZH^3 \setminus \set{0} \\ 
-r+q \neq 0
\end{array} \end{tiny}} \sin^2 \left( \frac{1}{2} (-q). \Th (-r) \right) \nn \\ 
& \times \dfrac{ \e^{\l_2 \mu_5 \l_1} q_{\mu_5}}{8 \pi^2 k \mid q \mid^2} \dfrac{ \e^{\mu_2 \mu_4 \mu_2} (r_{\mu_4} - q_{\mu_4})}{8 \pi^2 k \mid r-q \mid^2 }\dfrac{ \e^{\nu_2 \mu_3 \nu_1} (-r_{\mu_3}) }{8 \pi^2 k \mid r \mid ^2}      \nn \\
=& -L \, ,
\end{align*}
thus $L = 0$. This result holds true for every single $2$-loop which can be constructed from the other possible choices of vertices
\begin{equation*}
\Diagram { \vertexlabel^{\l} \\ fd  \\
& f \vertexlabel_{\mu} \\ \vertexlabel_{\nu} fu  }  \quad
\Diagram { \vertexlabel^{c} \\ gd  \\ & f  \vertexlabel^{\a}\\ \vertexlabel_{c^{\ast}}  gu \\ }
\end{equation*}
or \\ 
\begin{equation*}
\Diagram { \vertexlabel^{c_1} \\ gd  \\
& f \vertexlabel^{\a}  \\
\vertexlabel_{c_1^{\ast}}  gu 
} \quad \Diagram { \vertexlabel^{c_2} \\ gd  \\
& f \vertexlabel^{\b} \\
\vertexlabel_{c_2^{\ast}}  gu 
}
\end{equation*} \\ \\
I. e. every single $2$-loop constructed from these vertices vanishes.

\begin{thm}  \label{second_expansion}
The $k^{-1}$-term in the loop expansion series of the partition function given in theorem \ref{partition} vanishes.
\end{thm}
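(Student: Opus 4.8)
The plan is to identify the $k^{-1}$-term in the loop expansion series with the sum over all 2-loops, and then to show that every such 2-loop vanishes by exploiting the antisymmetry already uncovered in the three-gauge-boson case. First I would note that in the loop expansion series of Theorem \ref{partition}, each $AAA$-vertex carries a factor of $k$ and each gauge-boson propagator $G^{A_\l A_\nu}$ carries a factor $k^{-1}$, while the ghost vertices and ghost propagators carry no power of $k$ at all. A connected diagram built from $V$ vertices of gauge-boson type, $P_A$ gauge-boson propagators, and any number of ghost ingredients thus scales like $k^{V - P_A}$. A 2-loop is by definition built from exactly two vertices whose six legs are pairwise joined by three propagators; enumerating the possibilities gives either two $AAA$-vertices joined by three $A$-propagators (scaling like $k^{2-3}=k^{-1}$), one $AAA$-vertex and one ghost vertex, or two ghost vertices. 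Hence the coefficient of $k^{-1}$ is precisely the sum of the contributions of all these 2-loops.

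Next I would treat the three families of 2-loops separately, reducing each to the computation already carried out in the excerpt. For the pure gauge-boson 2-loop, the contribution $L$ was written out explicitly: it is a sum over $q,r \in \ZH^3 \setminus \set{0}$ with $r - q \neq 0$ of a product of two $\e$-tensors from the vertices, three propagators, and the factor $\sin^2\!\left(\frac12 q.\Th r\right)$. The key observation is that, as shown above, performing the substitution $q \ra -q$, $r \ra -r$ leaves the summation domain and the even factor $\sin^2\!\left(\frac12 q.\Th r\right)$ invariant, but flips the overall sign of the integrand, since each of the three propagator numerators $q_{\mu_5}$, $(q-r)_{\mu_4}$, $r_{\mu_3}$ is odd in $(q,r)$ and their product changes sign under the simultaneous reflection. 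This forces $L = -L$, hence $L = 0$. The same reflection argument applies verbatim to the mixed and pure-ghost 2-loops, because the ghost vertex $V^{c^h A_\mu c^{\ast h}}$ carries the single odd momentum factor $q^h_\mu$ together with the even weight $\sin\!\left(\frac12 q^h.\Th r^h\right)$, and the ghost propagator $G^{cc^\ast}(q) = \frac{i}{8\pi^3}\frac{1}{\mid q\mid^2}$ is even in $q$; so in each case the integrand is again odd under $(q,r)\mapsto(-q,-r)$ while the domain is symmetric.

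I would then assemble these three vanishing results: since the $k^{-1}$-coefficient is the sum of the contributions of all 2-loops, and each individual 2-loop vanishes by the reflection symmetry, the coefficient is zero. I expect the main obstacle to be the careful bookkeeping rather than any deep analytic difficulty: one must verify that the six distinct ways of contracting the legs of the two vertices all yield the same reflection-invariant expression (the excerpt asserts ``one obtains always'' the same $L$, and I would want to confirm the combinatorial factor and the claimed equality of all six contractions), and one must confirm that the $\Th$-dependent phases from the vertices combine into the even function $\sin^2$ (respectively $\sin$ times a factor that is even after contraction) rather than leaving a residual odd imaginary part that could spoil the cancellation. Modulo this combinatorial check, the argument is purely a parity-in-momentum statement, so no convergence or regularization subtleties beyond those already handled by the Wodzicki-residue framework of the earlier sections should arise.
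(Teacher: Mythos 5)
Your proposal is correct and takes essentially the same route as the paper: the paper likewise identifies the $k^{-1}$-coefficient with the sum of all $2$-loops and kills each one by the reflection $(q,r)\mapsto(-q,-r)$, under which the summation domain and the $\sin^2$-weight are invariant while the product of the three odd momentum numerators flips sign, forcing $L=-L=0$, with the ghost cases dispatched by the same parity count. The only additions on your side are the explicit power-counting justification that the $k^{-1}$-term consists precisely of two-vertex diagrams (which the paper merely asserts), and a small imprecision worth noting: the mixed gauge--ghost $2$-loop cannot actually be contracted at all, since there is no $A$--ghost propagator, so it vanishes trivially rather than by the reflection argument.
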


This result corresponds to the classical, commutative one proved by E. Witten in \cite{witten}. Let $X$ be a two dimensional and smooth manifold without boundary and $M = X \times \TH$, where $\TH$ denotes the one dimensional circle. Witten proves in section 4.4 of \cite{witten} that the partition function of $M$ is independent of $k$. More precisely, he proves $Z(k) = \text{dim} \H_{X} $, where $\H_X$ is a Hilbert space which is related to the manifold $X$ by an Hamiltonian formalism which is outlined in \cite{witten}. Therefore, the coefficient of $k^{-1}$ in the Taylor expansion series of $Z(k)$ vanishes. In theorem \ref{second_expansion} we recovered this result for the special case that the manifold $X$ is a noncommutative 2-torus. \\
This can be motivated as follows. The deformation matrix $\Th$ is a skew-symmetric $3 \times 3$ matrix. Let $\Th$ be of the form
\begin{equation*}
\Th = \left( \begin{array}{ccc}
0 & \Th & 0 \\ 
-\Th & 0 & 0 \\ 
0 & 0 & 0
\end{array} \right),
\end{equation*} 
with $\Th \in \RH$. In this case $ \A_\Th \cong C^{\infty} \left( \TH^2_\Th \right) \ot C^{\infty}(\TH)$. Hence, we can think of the noncommutative 3-torus being a composition of a nonocmmutative 2-torus and $\TH$. Hence theorem \ref{second_expansion} is a noncommutative generalization of Witten's result for the case $X= \TH^2$.

\end{document}